\documentclass[12pt]{article}

\usepackage[cp1251]{inputenc}
\usepackage[english]{babel}
\usepackage{ucs}

\usepackage{amssymb, amsthm, amsmath, latexsym, txfonts, lmodern, mathrsfs, wasysym}

\usepackage{graphicx, wrapfig, caption, subcaption, tikz}

\usepackage{algorithm, algorithmicx}

\usepackage{parskip}
\usepackage{multicol, enumerate, paralist, titlesec, mathtools}
\usepackage[numbers,sort&compress]{natbib}

\usepackage[hidelinks]{hyperref}

\usetikzlibrary{decorations.pathreplacing}

\newtheorem{theorem}{Theorem}
\numberwithin{theorem}{section}

\newtheorem{conjecture}{Conjecture}
\numberwithin{conjecture}{section}

\numberwithin{corollary}{section}

\numberwithin{example}{section}

\newtheorem{lemma}{Lemma}
\numberwithin{lemma}{section}

\numberwithin{proposition}{section}

\numberwithin{problem}{section}

\newtheorem{remark}{Remark}
\numberwithin{remark}{section}

\newtheorem{claim}{Claim}
\numberwithin{claim}{theorem}

\newtheorem{definition}{Definition}
\numberwithin{definition}{section}

\def\qed{\hfill\ifhmode\unskip\nobreak\fi\qquad\ifmmode\Box\else\hfill$\Box$\fi}

\author{
Hadeel Al Bazzal\thanks{KALMA, Faculty of Sciences, Lebanese University, Baalbek, Lebanon; LIB, Universit\'e Bourgogne Europe, Dijon, France}
\and
Olivier Togni\thanks{LIB, Universit\'e Bourgogne Europe, Dijon, France}
}

\usepackage{fullpage}
\begin{document}

\title{t-tone colorings of outerplanar and Halin graphs}
\maketitle

\begin{abstract}
A $t$-tone $k$-coloring of a graph $G$ assigns a set of $t$ distinct colors from $\{1, \dots, k\}$ to each vertex so that vertices at distance $d$ share fewer than $d$ common colors. The $t$-tone chromatic number of $G$ is the minimum $k$ such that $G$ has a $t$-tone $k$-coloring. This paper investigates the $t$-tone coloring of two specific subclasses of planar graphs: subcubic outerplanar graphs and Halin graphs. We provide a complete characterization of the $2$-tone chromatic number for subcubic outerplanar graphs and establish a sharp upper bound for their $3$-tone chromatic number. We then turn to Halin graphs and prove that every cubic Halin graph of order $n \ge 6$ is $2$-tone $7$-colorable. Moreover, we derive an upper bound on the $2$-tone chromatic number for Halin graphs with arbitrary maximum degree.
\end{abstract}

\textbf{Keywords:} $t$-tone coloring, $t$-tone chromatic number, subcubic graph, outerplanar graph, Halin graph.\\

\noindent \textbf{AMS Subject Classification:} 05C15

\section{Introduction}
All graphs considered in this paper are assumed to be simple and finite. For a graph $G$, we let $V(G)$ denote its vertex set. The minimum degree and maximum degree of $G$ are denoted by $\delta(G)$ and $\Delta(G)$, respectively. For any distinct vertices $u, v \in V(G)$, the distance between $u$ and $v$ in $G$, denoted by $d_G(u,v)$, is the length of a shortest $uv$-path in $G$. The degree of a vertex $v$ is denoted by $d_G(v)$. The neighbors of $v$ are the vertices at distance one from $v$, and the second neighbors of $v$ are the vertices at distance two from $v$, forming the set $N_G^2(v)$. We write $C_n$, $P_n$, and $K_n$ for the cycle, path, and complete graph on $n$ vertices, respectively. The wheel graph $W_d$ is obtained by joining a single vertex to every vertex of a cycle $C_{d-1}$. We denote by $\binom{\{1,\dots,k\}}{t}$ the family of all $t$-element subsets of $\{1,\dots,k\}$.

\noindent A graph $G$ is subcubic if $\Delta(G) \le 3$. An outerplanar graph is a graph that admits a planar embedding in which every vertex lies on the boundary of the outer face. In such an embedding, all bounded regions are called bounded faces. Furthermore, a Halin graph $H = T \cup C$ is a plane graph consisting of a tree $T$ with no vertices of degree two, and a cycle $C$ that connects the leaves of $T$ according to the cyclic order imposed by the planar embedding of $T$.

\noindent The primary focus of this paper is the $t$-tone $k$-coloring, a generalization of proper coloring. 
\begin{definition}
Let $G$ be a graph, and let $t$ and $k$ be positive integers with $t\le k$. A \emph{$t$-tone $k$-coloring} of $G$ is a function $f \colon V(G) \to \binom{\{1, \dots, k\}}{t}$ such that $|f(u) \cap f(v)| < d_G(u,v)$ for all distinct vertices $u,v \in V(G)$.  A graph $G$ that admits a $t$-tone $k$-coloring is said to be \emph{$t$-tone $k$-colorable}. The \emph{$t$-tone chromatic number} of $G$, denoted $\tau_t(G)$, is the smallest integer $k$ for which $G$ is $t$-tone $k$-colorable.
\end{definition}
\noindent For a given $t$-tone coloring $f$, we call $f(v)$ the label of $v$, and the elements within $\{1, \dots, k\}$ colors. To simplify the notation, we frequently omit set brackets and commas; for instance, we write $ab$ instead of $\{a, b\}$ and $abc$ instead of $\{a, b, c\}$. Observe that for any fixed $t$, the parameter $\tau_t$ is monotone: if $H \subseteq G$, then $\tau_t(H) \le \tau_t(G)$.

\noindent The notion of $t$-tone $k$-coloring was introduced by Chartrand and first studied in a research group directed by Zhang, consisting of Fonger, Goss, Phillips, and Segroves \cite{F}. It was then thoroughly explored by Bickle and Phillips \cite{BP}.

\noindent The most widely studied case of $t$-tone coloring is when $t=2$. In this setting, each vertex is assigned a $2$-element label such that adjacent vertices receive disjoint labels, and vertices at distance two receive distinct labels. Several basic results follow immediately; for instance, $\tau_2(K_n)=2n$ and $\tau_2(K_4-e)=7$.  

\noindent For cubic graphs, Bickle and Phillips \cite{BP} proposed the following conjecture.

\begin{conjecture}\label{conj}
Let $G$ be a cubic graph. Then:
\begin{enumerate}
    \item $\tau_2(G)\le 8$.
    \item If $G$ does not contain $K_4$, then $\tau_2(G)\le 7$.
    \item If $G$ does not contain $K_4-e$, then $\tau_2(G)\le 6$.
\end{enumerate}
\end{conjecture}

\noindent In \cite{C1}, Cranston, Kim, and Kinnersley confirmed the first part of the conjecture and disproved the third by presenting a counterexample, namely the Heawood graph, which does not contain $K_4-e$. Since then, $2$-tone coloring has been investigated for several graph classes, including cycles \cite{BP}, trees \cite{F, BP}, wheels \cite{B1}, and others \cite{r, BN, B2, B3, B4, BP, C1, C2, Dong1, Dong2, WuThesis, YangThesis}. Moreover, $2$-tone colorings of graph products were studied in \cite{B1, Cooper, Loe}. General upper bounds for the $2$-tone chromatic number were established in \cite{B1, C1, C2, Dong1, Dong2}, while lower bounds were considered in \cite{Pan}. Finally, $t$-tone coloring for general values of $t$ has been explored in \cite{C2, WuThesis, YangThesis}.

\noindent In this paper, we investigate the $t$-tone coloring of subcubic outerplanar graphs and Halin graphs. The paper is organized as follows. In Section $2$, we examine subcubic outerplanar graphs. We show that a subcubic outerplanar graph $G$ admits a $2$-tone $5$-coloring if and only if $G$ contains no cycles of length $3$, $4$, or $7$. Then we prove that a subcubic outerplanar graph $G$ admits a \emph{good} $2$-tone $6$-coloring if and only if $G$ does not contain $K_4-e$, where a good $2$-tone $k$-coloring is a $2$-tone $k$-coloring in which any two vertices at distance two share exactly one color. These results together yield a complete characterization of the $2$-tone chromatic number of connected subcubic outerplanar graphs of order $n \ge 3$.
\begin{theorem}
Let $G$ be a connected subcubic outerplanar graph of order $n \ge 3$. Then
\[
\tau_2(G)=
\begin{cases}
5, & \text{if } G \text{ contains no } C_3,C_4,C_7,\\
6, & \text{if } G \text{ contains one of } C_3,C_4,C_7 \text{ but no } K_4-e,\\
7, & \text{otherwise}.
\end{cases}
\]
\end{theorem}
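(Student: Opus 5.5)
This theorem follows from two characterizations announced in the introduction, which I plan to prove first. The first is that a subcubic outerplanar graph is $2$-tone $5$-colorable if and only if it contains no $C_3$, $C_4$ or $C_7$. The second is that it admits a good $2$-tone $6$-coloring if and only if it contains no $K_4-e$. To these I add a general upper bound of $7$.

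For the lower bounds, I would argue as follows.

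\begin{itemize}
\item \textbf{Every graph here has $\tau_2(G)\ge 5$.} Since $n\ge 3$ and $G$ is connected, $G$ contains a path $P_3=uvw$. The label of $v$ is disjoint from the labels of $u$ and $w$, and the labels of $u$ and $w$ are distinct. Hence at least $2+3=5$ colors are needed, since $u$ and $w$ together use at least $3$ colors outside $f(v)$.
\item \textbf{Cycles force a sixth color.} Known values give $\tau_2(C_3)=6$ (since $\tau_2(K_3)=6$), $\tau_2(C_4)=6$, and $\tau_2(C_7)=6$ (Bickle--Phillips for cycles). By monotonicity, any $G$ containing one of these cycles has $\tau_2(G)\ge 6$.
\item \textbf{$K_4-e$ forces a seventh color.} Since $\tau_2(K_4-e)=7$, containing $K_4-e$ forces $\tau_2(G)\ge 7$.
\end{itemize}

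There is a subtlety about what ``contains'' means. For cycles it must mean subgraph, not induced subgraph, for monotonicity to apply. This matches the characterization.

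For the upper bounds, the $5$-colorability direction handles the first case. For the second case, a good $6$-coloring is in particular a $2$-tone $6$-coloring. For the third case I still need $\tau_2(G)\le 7$ for every subcubic outerplanar $G$. The known result of Cranston--Kim--Kinnersley gives only $\tau_2(G)\le 8$ for subcubic graphs, and $\le 7$ when $G$ has no $K_4$. Outerplanar graphs are $K_4$-minor-free and hence contain no $K_4$, so this bound gives $7$.

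To check that the three cases are exhaustive: $K_4-e$ contains $C_3$, so any $G$ containing $K_4-e$ falls into the ``otherwise'' case, and $\tau_2(G)=7$ there.

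The real work lies in the two characterizations, and I would prove both by induction on outerplanar structure.

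\begin{itemize}
\item \textbf{The $5$-coloring.} A subcubic outerplanar graph has a vertex of degree at most $1$, or a degree-$2$ vertex on a face, or a long ear of degree-$2$ vertices. I would delete such a path and extend the coloring. The labels available with $5$ colors are the $10$ pairs, and a vertex must avoid the colors of its neighbours and the labels of its second neighbours. For a degree-$2$ path inserted between two colored endpoints, the feasible lengths are governed by the cycle lengths $5$, $6$ and $\ge 8$ that are $5$-colorable. Excluding $C_7$ is exactly what makes the extension always possible. I would keep an invariant, for example a ``good'' $5$-coloring in which distance-$2$ vertices share a color, to control the extension.
\item \textbf{The good $6$-coloring.} This is analogous: reduce along faces. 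Absence of $K_4-e$ means that no two triangles share an edge, which controls the tight configurations.
\end{itemize}

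I expect the main obstacle to be the case analysis for extending across faces when the neighbours already constrain the labels, especially for the $5$-coloring with faces of lengths $5$ and $6$.
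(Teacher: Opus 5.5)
Your overall structure matches the paper's. The lower bounds come from $P_3$, from $\tau_2(C_n)=6$ for $n\in\{3,4,7\}$ with monotonicity, and from $\tau_2(K_4-e)=7$. The upper bounds in the first two cases come from Theorems~\ref{t1} and~\ref{t2}. Your exhaustiveness remark is also correct.

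The gap is in the upper bound $\tau_2(G)\le 7$ for the third case. What Cranston, Kim and Kinnersley proved for subcubic graphs is $\tau_2(G)\le 8$, i.e.\ part (1) of Conjecture~\ref{conj}. The statement ``$\tau_2(G)\le 7$ when $G$ has no $K_4$'' is part (2) of that conjecture, and it is not a known theorem. The paper itself confirms it only for cubic Halin graphs (Theorem~\ref{t1'}). Moreover, the conjecture is stated for cubic graphs, whereas an outerplanar graph always has a vertex of degree at most $2$. So even a cubic version would need an extra embedding argument. As written, your third case has no valid upper bound, and the value $7$ is not established.

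The fix, which is how the paper closes this case, is to use an outerplanar-specific result. Bickle proved that every outerplanar graph of order $n\ge 4$ satisfies
\[
\tau_2(G)\le \max\left\{7,\left\lceil \frac{5+\sqrt{1+8(\Delta(G)+3)}}{2}\right\rceil\right\}.
\]
This equals $7$ when $\Delta(G)\le 3$. The order hypothesis holds automatically, since $G\supseteq K_4-e$ forces $n\ge 4$.

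If you instead wanted to prove the bound $7$ directly by a pendant-face reduction, note that naive greedy extension is not immediate. With $7$ colors, a degree-$2$ vertex can have only $\binom{3}{2}=3$ candidate labels against up to $4$ second neighbours, so some case analysis would be needed.

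Your sketches of the two characterizations are only outlines. Since the paper proves them as separate theorems, relying on them here is legitimate.
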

\noindent Next, we establish a sharp upper bound for the $3$-tone chromatic number of subcubic outerplanar graphs by proving that $\tau_3(G)\le 11$ for every such graph $G$. Rather than working directly with $3$-tone colorings, we introduce a strengthened coloring framework, called a \emph{$3$-good $k$-coloring}, which assigns a set of three colors from $\{1,2,\dots,k\}$ to each vertex of $G$ such that any two adjacent vertices share no colors, and any two vertices at distance two share exactly one color. This exact intersection condition ensures that all $3$-tone constraints are automatically satisfied, while significantly simplifying the analysis of color interactions at distance three. We show that every subcubic outerplanar graph admits a $3$-good $11$-coloring, which immediately implies the desired upper bound on $\tau_3(G)$. We conclude this section by proposing a conjecture that relates $\tau_t(G)$ to $\tau_{t-1}(G)$ for subcubic outerplanar graphs.

\noindent In Section $3$, we turn our attention to Halin graphs. We show that every cubic Halin graph of order $n \ge 6$ admits a $2$-tone $7$-coloring and then derive a general upper bound on the $2$-tone chromatic number of Halin graphs with arbitrary maximum degree. While the latter result applies to all Halin graphs, the bound obtained in the cubic case is strictly stronger. From an asymptotic perspective, our bound for Halin graphs scales as $\sqrt{2\Delta}$, improving on known bounds for general planar graphs.

\section{$2$-tone and $3$-tone colorings of subcubic outerplanar graphs}

This section examines subcubic outerplanar graphs, presenting a complete characterization of their $2$-tone chromatic number and establishing a sharp upper bound for their $3$-tone chromatic number.

\noindent Cranston and LaFayette~\cite{C2} proved that for any outerplanar graph $G$, \[
\tau_2(G) \le 
\left\lceil 
\sqrt{2\Delta(G) + 4.25}
+ 5.5
\right\rceil.
 \] Later, Bickle~\cite{B2} improved this bound by showing that if $G$ is an outerplanar graph of order $n \geq 4$, then $\tau_2(G) \le \max\left\{7,\;\left\lceil \frac{5 +\sqrt{1 + 8(\Delta(G) + 3)}}{2} \right\rceil\right\}$, which in particular implies that $\tau_2(G) \le 7$ for $\Delta(G)\le3$.

\noindent Given an outerplane graph $G$, the \emph{weak dual} $\tau(G)$ is the graph whose vertices correspond to the bounded faces of $G$, with two vertices adjacent whenever the corresponding faces share an edge. For a bounded face $F$ of $G$, we denote by $C(F)$ the cycle of $G$ that bounds $F$. It is well known that a plane graph is outerplane if and only if its weak dual is a forest. Let $G$ be a subcubic outerplane graph. A bounded face $F$ of $G$ is called a \emph{pendant} face if $F$ corresponds to a leaf of $\tau(G)$ and $C(F)$ contains either exactly one vertex of degree three or exactly two consecutive vertices of degree three. 

\noindent Since outerplane graphs may contain cycles, we recall the following lemma:

\begin{lemma}[\cite{BP}]\label{l1}
For the cycle $C_n$,
\[
\tau_2(C_n) =
\begin{cases}
6, & \text{if } n \in \{3, 4, 7\},\\
5, & \text{otherwise.}
\end{cases}
\]
\end{lemma}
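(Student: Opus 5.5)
The plan has two halves: a lower bound $\tau_2(C_n)\ge 5$ for every $n$ together with $\tau_2(C_n)\ge 6$ when $n\in\{3,4,7\}$, and explicit colorings with $5$ or $6$ colors. Throughout I use the fact that in a $2$-tone coloring the three labels on any path $uvw$ are pairwise distinct. The labels $f(u),f(w)$ are disjoint from $f(v)$, and they are distinct because $d(u,w)\le 2$.

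\textbf{Lower bounds.} If $v$ has neighbors $u,w$, then $f(u)\cup f(w)$ avoids the two colors of $f(v)$. Suppose only $4$ colors are available. Then $f(u)$ and $f(w)$ are both the complement of $f(v)$ in $\{1,\dots,4\}$, so they coincide, a contradiction. Hence at least $5$ colors are needed.

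For $n\in\{3,4\}$ all pairs of vertices are at distance at most $2$, so the labels are pairwise distinct. For $C_3$ the labels are pairwise disjoint, which needs $6$ colors. For $C_4=v_1v_2v_3v_4$ with $5$ colors, each of $f(v_1),f(v_3)$ lies in the $3$-set complementary to $f(v_2)$. Since they are distinct, they share exactly one color and their union $A$ has size $3$. The same argument with $v_4$ shows $f(v_2)$ and $f(v_4)$ lie in $\{1,\dots,5\}\setminus A$, which has size $2$. So $f(v_2)=f(v_4)$, a contradiction.

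$C_7$ is the main obstacle. With $5$ colors, every color class is an independent set of $C_7$, so each color is used at most $3$ times. The total count is $14$, so the multiplicities are something like $3,3,3,3,2$ or $3,3,3,3,2$ with permutations. A color used $3$ times on $C_7$ sits on positions forming the gap pattern $2,2,3$. Two vertices at distance $2$ whose labels share a color $c$ must have equal labels only if they share both colors, and this is forbidden. I would exploit this by case analysis on the placement of the four colors used three times. Each such color covers two distance-$2$ pairs, so those pairs must differ in their second color. Enumerating placements up to rotation and reflection should yield a contradiction. As a fallback, this finite check can simply be done exhaustively.

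\textbf{Upper bounds.} $C_3$, $C_4$ and $C_7$ are handled with $6$ colors.
\begin{itemize}
\item For $C_3$, use $12,34,56$.
\item For $C_4$, use $12,34,15,36$.
\item For $C_7$, build labels greedily around the cycle and check that $6$ colors suffice.
\end{itemize}

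For the remaining $n$, I would give $5$-color blocks that concatenate. The basic block is a $5$-cycle coloring such as $12,34,51,23,45$. This is valid on $C_5$, and when repeated periodically it gives $C_{5m}$. I also need blocks of lengths $6,8,9$ that close up, for example by finding valid $5$-colorings of $C_6$, $C_8$ and $C_9$ by hand. Each of these blocks should start with $12,34$ and end with $23,45$ like the pentagon, so that blocks can be spliced; I would verify that the distance-$2$ constraints across the junctions hold.

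Every $n\ge 5$ with $n\ne 7$ is a sum of $5$'s plus at most one of $6,8,9$. Indeed $n\equiv 0,1,3,4\pmod 5$ is handled by $5m$, $6+5m$, $8+5m$ and $9+5m$ respectively. For $n\equiv 2\pmod 5$ with $n\ge 12$, use $6+6+5m$. The only uncovered value is $n=7$, which matches the exceptional case in the statement.
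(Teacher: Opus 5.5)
The paper does not prove this lemma; it quotes it from Bickle and Phillips. So your proposal has to stand on its own, and its overall structure is sound. The $4$-color lower bound and the lower bounds for $C_3$ and $C_4$ are correct, and the splicing scheme for $n\ge 5$, $n\ne 7$ works.

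The genuine gap is $\tau_2(C_7)\ge 6$. This is the only nontrivial exceptional case, and you leave it as ``enumerating placements \dots\ should yield a contradiction''. You are one line from finishing it.

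\begin{itemize}
\item Your multiplicity count $3,3,3,3,2$ is right. A color used three times sits on positions $p,p+2,p+4$, so it is shared by exactly two of the seven distance-two pairs of $C_7$.
\item The four such colors therefore give $4\cdot 2=8$ incidences between colors and distance-two pairs.
\item Two vertices at distance two share at most one color, so there can be at most $7$ such incidences. This is a contradiction.
\end{itemize}

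There is also a second route. With $5$ colors, your own $C_4$ argument gives $f(u)\cup f(w)=\{1,\dots,5\}\setminus f(v)$ for every path $uvw$. Hence every color appears among any three consecutive vertices. Consecutive occurrences of a color are then at cyclic distance $2$ or $3$. On $C_7$ this forces every color to occur exactly $3$ times, for a total of $15\neq 14$.

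The constructive half is also promised rather than delivered, though it can be completed. ``Greedy'' does not obviously close up on $C_7$: the last vertex may have only one candidate label, and a second neighbor can block it. An explicit $6$-coloring is $12,34,15,24,13,25,36$, as in the paper's Lemma~\ref{l2}.

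The $5$-color blocks of lengths $6,8,9$ with your boundary pattern do exist, for example:
\begin{itemize}
\item length $6$: $12,34,25,14,23,45$;
\item length $8$: $12,34,15,24,35,14,23,45$;
\item length $9$: $12,34,15,24,13,25,14,23,45$.
\end{itemize}
Each is a valid $2$-tone $5$-coloring of the corresponding cycle. The splicing claim then needs no further checking. Any window of three consecutive vertices that crosses a junction reads $23,45,12$ or $45,12,34$, and these are exactly the wrap-around windows of each block. Constraints at distance at least $3$ are vacuous for $t=2$.
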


\begin{theorem}\label{t1}
   A subcubic outerplanar graph $G$ has a $2$-tone $5$-coloring if and only if $G$ does not contain $C_3$, $C_4$, or $C_7$.
\end{theorem}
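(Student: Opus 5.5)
We prove the two directions separately. \emph{Necessity} is immediate from Lemma~\ref{l1} and the monotonicity of $\tau_2$ noted in the introduction: if $G$ contains $C_3$, $C_4$ or $C_7$, then $\tau_2(G)\ge 6$. (If that cycle is not an induced, isometric subgraph of $G$, one only needs that a coloring of $G$ restricts to a valid coloring of the cycle's vertices. This holds because distances in $G$ are at most the distances in the cycle, so the constraints only become stronger.)

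For \emph{sufficiency} I will use the Kneser graph $K(5,2)$, i.e.\ the Petersen graph $\mathcal P$. Its vertices are the $10$ pairs from $\{1,\dots,5\}$, with two pairs adjacent when they are disjoint. The key observation is the following. A map $f:V(G)\to\binom{\{1,\dots,5\}}{2}$ is a $2$-tone $5$-coloring exactly when:
\begin{itemize}
\item[(i)] every edge of $G$ is sent to an edge of $\mathcal P$;
\item[(ii)] any two vertices at distance two receive distinct labels.
\end{itemize}
The condition at distance $3$ or more is automatic for pairs. By (i), two neighbours of the same vertex $v$ both lie in $N_{\mathcal P}(f(v))$, which consists of $3$ pairwise intersecting pairs. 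Hence (ii) says that $f$ is locally injective on each closed neighbourhood. So a 2-tone 5-coloring is precisely a locally injective homomorphism $G\to\mathcal P$. Along a path this is exactly a non-backtracking walk in $\mathcal P$.

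Lemma~\ref{l1} reflects the fact that $\mathcal P$ has girth $5$ and contains cycles of every length in $\{5,6,8,9\}$, but none of length $7$. The tool I will use is the $3$-arc-transitivity of $\mathcal P$ under $S_5$. From it I will derive a lemma by a finite check on $\mathcal P$:
\begin{quote}
for every $\ell\ge 5$ with $\ell\ne 7$, and every non-backtracking walk $W$ of length $2$ or $3$ in $\mathcal P$, there is a closed non-backtracking cycle of length $\ell$ in $\mathcal P$ containing $W$, as well as many walks of length $\ell-|W|$ joining prescribed end-arcs.
\end{quote}

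The sufficiency proof is then an induction on $|V(G)|$, where $G$ is a subcubic outerplane graph with no $C_3,C_4,C_7$. We may assume $G$ is connected.

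\textbf{Case 1: $G$ has a vertex $v$ of degree $1$.} Let $u$ be its neighbour. Color $G-v$ by induction. Exactly $3$ pairs are disjoint from $f(u)$, and at most two of them are used by the other neighbours of $u$. So $v$ can be colored.

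\textbf{Case 2: $\delta(G)\ge 2$ and $G$ is a cycle.} Then Lemma~\ref{l1} applies directly.

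\textbf{Case 3: otherwise.} Since the weak dual $\tau(G)$ is a forest, $G$ has a pendant face $F$. Its bounding cycle $C(F)$ has length $\ell\ge5$ with $\ell\ne7$. Moreover, $C(F)$ consists of a path $P$ of degree-$2$ vertices together with either one vertex $x$ of degree $3$, or two adjacent vertices $x,y$ of degree $3$. Delete the interior of $P$ and color the smaller graph by induction. Then:
\begin{itemize}
\item In the one-vertex case, the colors of $x$ and of its outside neighbour fix a $1$-arc of $\mathcal P$.
\item In the two-vertex case, the colors of $x$, $y$ and their outside neighbours fix a non-backtracking $3$-walk in $\mathcal P$.
\end{itemize}
Extending the coloring to $P$ amounts to closing this walk into a closed non-backtracking walk of length $\ell$. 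The closed walk must also avoid the labels of the outside neighbours at the two junctions, which is the distance-two condition across $x$ and $y$.

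The main obstacle is this closing step for the short faces, $\ell\in\{5,6\}$. There the required closed walk is essentially a $5$- or $6$-cycle of $\mathcal P$ through a prescribed $3$-arc, and it is not obvious that the extra junction constraints can still be met. I would first try to settle this by the finite check using $3$-arc-transitivity: up to symmetry there is only one $3$-arc, so one can list the $5$-cycles and $6$-cycles through it, together with the outside-neighbour labels they must avoid.

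If some configuration fails, I would strengthen the induction. Instead of extending a fixed coloring of $G-\mathrm{int}(P)$, I would also delete $x$ (and $y$), so that only a single precolored arc remains. Applying $S_5$ to recolor the face then leaves enough freedom, and I would use the three choices available at each degree-$3$ vertex to avoid the outside labels.
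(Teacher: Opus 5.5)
Your route is the paper's: a minimal counterexample, removal of a degree-one vertex, the cycle case via Lemma~\ref{l1}, and deletion and re-insertion of the degree-two vertices of a pendant face. Your Petersen-graph reformulation is a faithful and clearer version of the paper's ``permute the colors'' step, since color permutations are exactly the automorphisms $S_5$ of $\mathcal P$. However, the proposal never proves the one nontrivial step, the extension across the pendant face. You leave it as ``the main obstacle'', to be settled by an unspecified finite check or, failing that, by a modified induction.

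Your description of what must be shown is also off. Write $C(F)=x\,y\,v_3\cdots v_\ell\,x$, let $z,w$ be the outside neighbours of $x,y$, and set $f(z)=c$, $f(x)=a$, $f(y)=b$, $f(w)=d$. The closed walk traced by $C(F)$ must \emph{not} contain $c\,a\,b\,d$. It contains the edge $ab$, and at $a$ and $b$ it must avoid $c$ and $d$ respectively.

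The missing observation is that $\mathcal P$ is cubic, so the junction constraints are not extra conditions at all. They force $f(v_\ell)=c'$ and $f(v_3)=d'$, where $c'$ is the third neighbour of $a$ and $d'$ the third neighbour of $b$. What is needed is therefore exactly a cyclically non-backtracking closed walk of length $\ell$ in $\mathcal P$ containing the $3$-arc $c'\,a\,b\,d'$. This is settled as follows:
\begin{itemize}
\item By Lemma~\ref{l1}, a cyclically non-backtracking closed walk of length $\ell$ exists for every $\ell\ge5$ with $\ell\ne7$. It is a $2$-tone $5$-coloring of $C_\ell$, i.e.\ a locally injective homomorphism into $\mathcal P$.
\item Any four consecutive vertices of such a walk form a $3$-arc. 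Since $\mathcal P$ is $3$-arc-transitive, an automorphism carries that $3$-arc onto $c'\,a\,b\,d'$.
\item The remaining distance-two requirements between new and old vertices are then automatic: $v_3$ against $x,w$; $v_\ell$ against $y,z$; $v_4$ against $y$; $v_{\ell-1}$ against $x$.
\item Distances among old vertices are unchanged, because the detour through the face has length $\ell-1\ge4$.
\item The one-vertex case is the same argument with $2$-arc-transitivity. The closed walk must pass through $f(x)$ via its two neighbours other than $f(z)$.
\end{itemize}

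So neither a case analysis for $\ell\in\{5,6\}$ nor a strengthened induction is needed. One smaller correction: in your auxiliary lemma, replace ``cycle'' by ``closed walk'', since $\mathcal P$ has no cycles of length $\ge10$. With these changes your argument is complete, and it makes rigorous the step the paper only asserts.
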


\begin{proof} 
Suppose that $G$ is a subcubic outerplane graph admitting a 2-tone 5-coloring; that is, $\tau_2(G)\le 5$. By the monotonicity of the $2$-tone chromatic number with respect to subgraph inclusion and by Lemma~\ref{l1}, any graph containing $C_3$, $C_4$, or $C_7$ has $2$-tone chromatic number at least $6$. Consequently, $G$ contains none of these cycles.

\noindent Conversely, suppose, for the sake of contradiction, that the statement is false. Then there exists a subcubic outerplane graph that contains no cycles of length $3$, $4$, or $7$ and does not admit a $2$-tone $5$-coloring. Among all such graphs, let $G$ be one with the minimum number of vertices. 

\noindent Clearly, $G$ must be connected; otherwise, each connected component of $G$ would admit a $2$-tone $5$-coloring, and these colorings could be combined to obtain a $2$-tone $5$-coloring of $G$, contradicting the choice of $G$.

\begin{claim}
    $\delta(G)\geq 2$.
\end{claim} 

\begin{proof}
Suppose, to the contrary, that $G$ contains a vertex $u$ of degree one, and let $v$ be its unique neighbor in $G$. Let $G' = G - \{u\}$. Clearly, $G'$ is a subcubic outerplane graph that does not contain any of $C_3$, $C_4$, or $C_7$. By the minimality of $G$, the graph $G'$ admits a $2$-tone $5$-coloring. Since $u$ is adjacent only to $v$, there remain three other colors, giving three possible labels for $u$. Since $u$ has at most two second neighbors in $G$, at least one of these possible labels can be assigned to $u$, a contradiction.\end{proof}

\noindent Note that $G$ must contain at least two bounded faces; otherwise, $G$ would be a cycle, which admits a $2$-tone $5$-coloring by Lemma~\ref{l1}, a contradiction.

\noindent Let $F$ be a pendant face in $G$ with $C(F)=v_1v_2\ldots v_\ell v_1$, where $\ell\ge 5$ and $\ell\neq 7$. Let $L\subseteq V(C(F))$ denote the set of vertices of degree two in $G$ belonging to this face, and define $G' = G-L$. Clearly, $G'$ is a subcubic outerplane graph containing none of $C_3$, $C_4$, or $C_7$. By the minimality of $G$, the graph $G'$ admits a $2$-tone $5$-coloring $f$. By Lemma~\ref{l1}, the cycle $C(F)$ admits a $2$-tone $5$-coloring. By permuting the colors of the $2$-tone $5$-coloring of $C(F)$, or by applying an automorphism of $C(F)$, there exists a $2$-tone $5$-coloring $g$ of $C(F)$ such that $f\cup g$ is a valid $2$-tone $5$-coloring of $G$, a contradiction.\end{proof}

\begin{definition}
    A \emph{good} $2$-tone $k$-coloring of a graph $G$ is a $2$-tone $k$-coloring of $G$ in which any two vertices at distance two share exactly one common color. 
\end{definition}

\begin{lemma}\label{l2}
Every cycle admits a good $2$-tone $6$-coloring.
\end{lemma}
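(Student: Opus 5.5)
We construct the colorings explicitly: a general construction handles all but finitely many lengths, and the remaining cycles are colored by hand. Write $C_n=v_1v_2\ldots v_nv_1$, with indices taken modulo $n$. On a cycle, two vertices at distance at least three may share up to two colors, which is automatic for $2$-element labels. So a good $2$-tone $6$-coloring only has to satisfy two conditions. First, consecutive labels are disjoint. Second, labels of $v_i$ and $v_{i+2}$ share exactly one color; this is the only distance-two pair when $n\ge 5$.

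\textbf{Reduction to a circular coloring.} Pick a cyclic sequence $x_1,\ldots,x_n$ of colors from $\{1,\ldots,6\}$ and set $f(v_i)=\{x_i,x_{i+2}\}$. Then $f(v_i)\cap f(v_{i+2})\ni x_{i+2}$. The conditions become the following.
\begin{itemize}
\item $x_i\neq x_{i+2}$, so that each label has two elements.
\item $\{x_i,x_{i+2}\}\cap\{x_{i+1},x_{i+3}\}=\emptyset$, which means $x_j\neq x_{j+1}$ and $x_j\neq x_{j+3}$ for all $j$.
\item $x_i\neq x_{i+4}$, so that $v_i$ and $v_{i+2}$ share only one color.
\end{itemize}
Thus it suffices that $x_j\ne x_{j+d}$ for $d\in\{1,2,3,4\}$, i.e.\ that $x$ is a proper $6$-coloring of the power $C_n^4$. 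This equivalence is valid once $n\ge 9$, so that the indices $i,\dots,i+4$ and their negatives do not wrap around and create extra coincidences.

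\textbf{The general construction.} Write $n=5a+6b$ with $a,b\ge 0$. Concatenate $a$ blocks $1\,2\,3\,4\,5$ and $b$ blocks $1\,2\,3\,4\,5\,6$ around the cycle. Any five cyclically consecutive entries are pairwise distinct, since each block is a run of distinct colors starting with $1,2,3,4$. Hence $x_j\neq x_{j+d}$ for $1\le d\le 4$. Every $n\ge 20$ has such a representation, and so do $n\in\{10,11,12,15,16,17,18\}$. This settles all these lengths.

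\textbf{Remaining cases.} The cases left are $n\in\{3,4,5,6,7,8,9,13,14,19\}$; here $n=5,6$ fall below the $n\ge 9$ threshold of the reduction. For these I would give explicit labelings.
\begin{itemize}
\item $C_3$: take $12,34,56$.
\item $C_4$: take $12,34,15,36$. Here $v_1,v_3$ share $1$ and $v_2,v_4$ share $3$.
\item $C_5,C_6$: the labelings come from the block sequences directly, but the distance conditions must be checked by hand, since distinct index offsets coincide modulo $n$.
\item $C_7,C_8,C_9,C_{13},C_{14},C_{19}$: search for a labeling, not necessarily of the form $\{x_i,x_{i+2}\}$, and verify it. For the larger of these it is natural to splice a short ad hoc segment into the periodic pattern above.
\end{itemize}
This finite case analysis is the main obstacle. $C_7$ is the most delicate case, since $\tau_2(C_7)=6$ already forces $6$ colors by Lemma~\ref{l1}. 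The attempt would be a direct search using the $12\,34\,56$ pattern with one modified segment.
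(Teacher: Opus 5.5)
Your circular construction is correct as far as it goes. With $f(v_i)=\{x_i,x_{i+2}\}$ and any five cyclically consecutive $x_j$ distinct, every adjacency and distance-two condition holds. The window argument also works verbatim for $n=5,6$, so those lengths need no hand check.

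\textbf{The gap.} The lengths $n\in\{7,8,9,13,14,19\}$ are not covered: for them you only propose a search, with no labeling written down or verified. Your framework cannot be repaired there by choosing other blocks. For $n\ge 5$, a labeling $\{x_i,x_{i+2}\}$ is good exactly when $x$ properly colors $C_n^4$. An independent set in $C_n^4$ has at most $\lfloor n/5\rfloor$ vertices, and $6\lfloor n/5\rfloor<n$ for each of these six lengths. So no labeling of your form exists for them, and the proof is incomplete for six lengths, including $C_7$.

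\textbf{The missing idea.} The paper's argument is that every $2$-tone $5$-coloring of any graph is automatically good. Suppose $u,w$ are at distance two and have disjoint labels. Their common neighbor then needs a $2$-element label avoiding four of the five colors, which is impossible. Hence $u$ and $w$ share a color, and they share exactly one because $d(u,w)=2$. Combined with Lemma~\ref{l1}, this gives a good $2$-tone $5$-coloring of $C_n$ for every $n\notin\{3,4,7\}$. That settles $8,9,13,14,19$ at once, and also every length your block construction handles.

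Only $C_3$, $C_4$, $C_7$ then need explicit labelings. Yours for $C_3$ and $C_4$ coincide with the paper's. For $C_7$ the paper gives $-12-34-15-24-13-25-36-$, which is checked directly.

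If you prefer to keep your self-contained route, you must exhibit and verify explicit good labelings, not of the form $\{x_i,x_{i+2}\}$, for each of $C_7,C_8,C_9,C_{13},C_{14},C_{19}$.
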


\begin{proof}
In any $2$-tone $5$-coloring, vertices at distance two must share a color; otherwise, disjoint labels would leave no feasible label for their common neighbor. Consequently, every $2$-tone $5$-coloring of a graph $G$ is a good $2$-tone $5$-coloring. Thus, by Lemma~\ref{l1}, each $C_n$ with $n\notin\{3,4,7\}$ admits a good $2$-tone $5$-coloring. For $n\in\{3,4,7\}$, the following are explicit good $2$-tone $6$-colorings:
\[
\begin{array}{rl}
C_3: & -12-34-56-,\\
C_4: & -12-34-15-36-,\\
C_7: & -12-34-15-24-13-25-36-.
\end{array}\]\end{proof}

\begin{theorem}\label{t2}
A subcubic outerplanar graph $G$ admits a good $2$-tone $6$-coloring if and only if $G$ does not contain $K_4-e$.
\end{theorem}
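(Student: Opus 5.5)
The plan has two parts: necessity, which comes from a short local argument on $K_4-e$, and sufficiency, which follows the minimal-counterexample scheme of Theorem~\ref{t1}.

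\textbf{Necessity.} Suppose $G$ contains $K_4-e$ with vertices $a,b,c,d$, where $ab$ is the missing edge. The vertices $c$ and $d$ are adjacent to each other and to both $a$ and $b$. So $f(a)$ and $f(b)$ are disjoint from $f(c)\cup f(d)$. Since $c$ and $d$ are adjacent, $|f(c)\cup f(d)|=4$. In a $6$-coloring this leaves exactly two colors for $a$ and $b$. Hence $f(a)=f(b)$, which already violates the distance-two condition, so $G$ has no good $2$-tone $6$-coloring. The same counting shows that the subgraph alone forces $\tau_2\ge 7$.

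\textbf{Sufficiency.} Take a counterexample $G$ with the minimum number of vertices. Then $G$ is connected; colorings of components combine because vertices in different components are at infinite distance.

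First I show $\delta(G)\ge 2$. If $u$ has degree one with neighbor $v$, color $G-u$ by minimality. The label of $u$ must avoid the two colors of $v$. It must also share exactly one color with each of the at most two other neighbors $w_1,w_2$ of $v$, whose labels avoid $f(v)$. Those labels lie in the $4$-set $S=\{1,\dots,6\}\setminus f(v)$, so we need a $2$-subset of $S$ meeting each $f(w_i)$ in exactly one element. Since $f(w_1)\ne f(w_2)$, such a set exists: pick $x\in f(w_1)\setminus f(w_2)$ and $y\in f(w_2)\setminus f(w_1)$, or if the labels are disjoint take one element from each. Vertices at distance $\ge 3$ impose no constraint, because a shared color count of $\le 2$ is always less than $3$. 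If $G$ is a cycle, Lemma~\ref{l2} finishes. Otherwise $G$ has a pendant face $F$, which exists in an outerplane graph with at least two faces.

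Let $L$ be the degree-two vertices of $C(F)$ and set $G'=G-L$; by minimality $G'$ has a good coloring $f$. The degree-three vertices on $C(F)$ are either one vertex $x$ or two adjacent vertices $x,y$. They are already colored, and each has one outside neighbor $x'$ (resp.\ $y'$), colored with some further second neighbors. I then extend along the path $C(F)-(V(G')\cap C(F))$, which has length $\ell-1$ or $\ell-2$, choosing labels greedily or from a template. Each new vertex must be disjoint from its neighbors and share exactly one color with every vertex at distance two. The constraints come from a bounded neighborhood near $x$ (and $y$) at the ends, and from the path itself in between.

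\textbf{Main obstacle.} The hard part is closing the cycle: the good condition is an \emph{exact} intersection requirement, so unlike Theorem~\ref{t1} a simple permutation argument is less obviously enough. I would treat small faces $\ell\in\{3,4,5\}$ explicitly, using the absence of $K_4-e$. This absence rules out two triangles sharing an edge, which guarantees that near a triangle the outside neighbors are far enough apart. For larger $\ell$ I would first fix the labels of the two (or three) vertices of $C(F)$ adjacent to $x$ (or $y$), chosen compatibly with $f(x)$, $f(x')$ and $f(y)$. Then I would fill the interior path using the good $5$- or $6$-colorings of Lemma~\ref{l2} after a suitable color permutation. The key observation is that the set of labels meeting a given label in exactly one color and avoiding the neighbor's colors has size about $4$ to $8$. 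This leaves enough freedom at each step, and only the last one or two vertices need a small case check.
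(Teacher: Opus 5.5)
Your necessity argument and the preliminary reductions (connectedness, $\delta(G)\ge 2$, the cycle case via Lemma~\ref{l2}, choice of a pendant face) are correct and match the paper.

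The gap is in the step that carries essentially all of the sufficiency direction: extending the good coloring $f$ of $G'=G-L$ over the deleted path $x,u_1,\dots,u_k,y$ of $C(F)$. You flag it as the main obstacle, but neither heuristic you offer settles it.

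\textbf{The greedy count.} It does not control the last vertex. Once $u_{k-1}$ is fixed (it must meet $f(y)$ in exactly one color), $u_k$ has only three candidates: the $2$-subsets of the three colors outside $f(y)\cup f(u_{k-1})$. Each of the exact-one requirements against $f(u_{k-2})$, $f(x)$ and $f(y')$ eliminates one of them. Concretely, take $\ell=6$ with $f(x)=12$, $f(y)=34$, $f(x')=36$, $f(y')=15$. The greedy choices $f(u_1)=46$, $f(u_2)=25$, $f(u_3)=36$ are all legal, yet the candidates $12,15,25$ for $u_4$ all fail. So backtracking is needed, and ``a small case check at the end'' is an unproved claim rather than an argument.

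\textbf{The template idea.} In the order you state it, this also fails. If you first prescribe $f(x)$, $f(y)$ and the labels of $u_1,u_k$, the only color permutation preserving these four labels is the identity. You would then need a good coloring of $C_\ell$ that already agrees with four prescribed consecutive labels, and Lemma~\ref{l2} does not provide one.

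\textbf{How the paper proceeds.} It never deletes the whole path.
\begin{itemize}
\item For $\ell\ge5$ it deletes only $v_2$ and adds the chord $v_1v_3$. The new face has length at least $4$, so $G'$ stays subcubic, outerplanar and $(K_4-e)$-free.
\item It gives $v_2$ the two colors outside $f(v_1)\cup f(v_3)$. This is legal because $v_4$, $v_\ell$ and the outside neighbor of $v_1$ each have exactly one color in that pair.
\item It then replaces one color of $v_3$ by the color of $f(v_1)$ not on $v_4$, choosing which color to replace according to $f(v_5)$.
\item Faces of length $4$ and $3$ are finished with explicit labels.
\end{itemize}

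\textbf{How to complete your route.} Reverse the order: take any good coloring $h$ of $C_\ell$ normalized so that $h(x)=f(x)=12$ and $h(y)=f(y)=34$.
\begin{itemize}
\item Case $\ell\ge4$ and $x'\ne y'$. Both $h(u_1)$ and $f(x')$ consist of one color of $\{3,4\}$ and one of $\{5,6\}$. Both $h(u_k)$ and $f(y')$ consist of one color of $\{1,2\}$ and one of $\{5,6\}$. Composing $h$ with $(34)$ and/or $(12)$, which fix $12$ and $34$, achieves $|h(u_1)\cap f(x')|=1$ and $|h(u_k)\cap f(y')|=1$ independently.
\item Case $\ell\ge4$ and $x'=y'$. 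Then $f(x')=56$ and both conditions hold automatically.
\item Case $\ell=3$. The middle vertex must get $56$, which fails only when $x'=y'$, i.e.\ when $K_4-e\subseteq G$.
\item One degree-three vertex. Permute $\{3,4,5,6\}$ so that $f(x')$ meets each of $h(u_1)$ and $h(u_k)$ in exactly one color.
\end{itemize}
Some argument of this kind has to be supplied before sufficiency is proved.
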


\begin{proof} 
Suppose that $G$ is a subcubic outerplane graph admitting a good $2$-tone $6$-coloring. Since any graph containing $K_4-e$ as a subgraph has $2$-tone chromatic number at least $7$, $G$ does not contain $K_4-e$.

\noindent Conversely, suppose, to the contrary, that the statement is false. Let $G$ be a minimal counterexample. Clearly, $G$ is connected.

\begin{claim}
    $\delta(G)\geq 2$.
\end{claim} 
\begin{proof}
\noindent Suppose, to the contrary, that $G$ contains a vertex $u$ of degree one, and let $G' = G-\{u\}$. Clearly, $G'$ is a subcubic outerplane graph that does not contain $K_4-e$. By the minimality of $G$, $G'$ admits a good $2$-tone $6$-coloring $f$. Let $v$ be the unique neighbor of $u$ in $G'$. Consider the following two cases:

\noindent Case 1. $d_{G'}(v) = 1$.\\
\noindent Let $v'$ be the unique neighbor of $v$ in $G'$. Assign $u$ a label consisting of one color from $f(v')$ and one color from $\{1,\dots,6\}\setminus (f(v)\cup f(v'))$. This extends $f$ to a good $2$-tone $6$-coloring of $G$, a contradiction.

\noindent Case 2. $d_{G'}(v) = 2$.\\
\noindent Let $v'$ and $v''$ be the two neighbors of $v$ in $G'$. Since the distance between $v'$ and $v''$ is at most two, their labels are distinct. Thus, assign $u$ a label consisting of one color from $f(v')\setminus f(v'')$ and a (different) color from $f(v'')\setminus f(v')$. This extends $f$ to a good $2$-tone $6$-coloring of $G$, a contradiction.\end{proof}

\noindent Note that $G$ must contain at least two bounded faces; otherwise, $G$ would be a cycle, which admits a good $2$-tone $6$-coloring by Lemma~\ref{l2}, a contradiction. Let $F$ be a pendant face in $G$ with $C(F) = v_1v_2 \dots v_\ell v_1$, where $\ell \ge 3$.

\begin{claim}
   $\ell \leq 4$.
\end{claim}

\begin{proof}
Suppose, for a contradiction, that $\ell \ge 5$. By the definition of a pendant face, we may assume without loss of generality that either both $v_1$ and $v_\ell$ have degree three, or only $v_1$ has degree three. Consider the graph $G' = (G - \{v_2\}) + v_1v_3$. Since $\ell > 4$, this operation does not create a copy of $K_4 - e$ in $G'$. By the minimality of $G$, the graph $G'$ admits a good $2$-tone $6$-coloring $f$. Let $y$ be the neighbor of $v_1$ that does not belong to $C(F)$. Without loss of generality, assume that $f(v_1) = 12$ and $f(v_3) = 34$, and set $S = \{5,6\}$. Since $f$ is a good $2$-tone $6$-coloring of $G'$, $v_4$ shares exactly one color with $v_1$. Assume, without loss of generality, that this color is $1$. Moreover, each of $v_\ell$ and $y$ shares exactly one color with $v_3$. Hence, the second color of each of $v_4$, $v_\ell$, and $y$ must belong to $S$. We therefore assign $f(v_2) = 56$.

\noindent It remains to ensure that $v_1$ and $v_3$ share exactly one color in $G$. Observe that $d_G(v_4)=2$. Hence, $v_3$ has exactly two second neighbors in $G$, namely $v_1$ and $v_5$. Since $f$ is a good $2$-tone $6$-coloring, $v_3$ and $v_5$ share exactly one color; without loss of generality, assume it is $3$. If $2 \in f(v_5)$, we relabel $f(v_3)$ as $24$; otherwise, we relabel it as $23$. Thus, $f$ extends to a good $2$-tone $6$-coloring of $G$, a contradiction.\end{proof}

\begin{claim}
   $\ell= 3$
\end{claim}

\begin{proof}
 \noindent Suppose, to the contrary, that $\ell=4$. If $d_G(v_1)=3$ and $d_G(v_i)=2$ with $i\in \{2,3,4\}$, then let $G' = G - \{v_2,v_3,v_4\}$. By the minimality of $G$, $G'$ admits a good $2$-tone $6$-coloring $f$. Let $y$ denote the neighbor of $v_1$ in $G'$, and suppose, without loss of generality, that $f(v_1)=12$ and $f(y)=34$. Assigning the labels $35$, $16$, and $45$ to $v_2$, $v_3$, and $v_4$, respectively, yields a good $2$-tone $6$-coloring of $G$, a contradiction.
 
 \noindent Hence, we may assume, without loss of generality, that $d_G(v_1)=d_G(v_2)=3$ and $d_G(v_3)=d_G(v_4)=2$. Let $G' = G - \{v_3,v_4\}$. By the minimality of $G$, $G'$ admits a good $2$-tone $6$-coloring $f$. Let $y_1$ and $y_2$ denote the neighbors of $v_1$ and $v_2$ not in $C(F)$, respectively. Suppose, without loss of generality, that $f(v_1)=12$ and $f(v_2)=34$. If $y_1 = y_2$, then $f(y_1)=56$. Assigning the labels $16$ and $35$ to $v_3$ and $v_4$, respectively, produces a good $2$-tone $6$-coloring of $G$, a contradiction. Hence, $y_1$ and $y_2$ must be distinct. Since $G'$ admits a good $2$-tone $6$-coloring, each of $y_1$ and $y_2$ shares exactly one color with $v_2$ and $v_1$, respectively, and takes its remaining color from the set $S$. Without loss of generality, assume that $f(y_1)\cap f(v_2) = \{3\}$, $f(y_2)\cap f(v_1) = \{1\}$, and $f(y_1)\cap S = \{5\}$, that is, we may assume $f(y_1)=35$. Now assign $45$ to $v_4$ and assign $16$ to $v_3$ if $6\notin f(y_2)$, and $26$ otherwise. This yields a good $2$-tone $6$-coloring of $G$, again a contradiction.\end{proof}

\noindent Then $F$ is a triangle. First, suppose, without loss of generality, that $d_G(v_1)=3$ and $d_G(v_2)=d_G(v_3)=2$. Let $G' = G - \{v_2, v_3\}$. By the minimality of $G$, $G'$ admits a good $2$-tone $6$-coloring $f$. Let $y$ denote the neighbor of $v_1$ in $G'$. Suppose, without loss of generality, that $f(v_1)=12$ and $f(y)=34$. Assigning the labels $35$ and $46$ to $v_2$ and $v_3$, respectively, yields a good $2$-tone $6$-coloring of $G$, a contradiction.

\noindent Therefore, we may assume, without loss of generality, that $d_G(v_1)=d_G(v_2)=3$ and $d_G(v_3)=2$. Let $G' = G - \{v_3\}$. By the minimality of $G$, the graph $G'$ admits a good $2$-tone $6$-coloring $f$. Suppose, without loss of generality, that $f(v_1)=12$ and $f(v_2)=34$. Let $y_1$ and $y_2$ be the neighbors of $v_1$ and $v_2$, not in $C(F)$, respectively. As $G$ does not contain $K_4-e$, $y_1$ and $y_2$ are distinct. Let $S=\{5,6\}$. Because $G'$ admits a good $2$-tone $6$-coloring, $f(y_2)$ and $f(y_1)$ share one color with $f(v_1)$ and $f(v_2)$, respectively, taking their remaining color from $S$. Hence, we may assign $56$ to $v_3$ and obtain a good $2$-tone $6$-coloring of $G$, a contradiction.\end{proof}

\noindent Theorem \ref{t2} establishes that part (3) of Conjecture~\ref{conj} is valid for the class of subcubic outerplanar graphs. Collectively, Theorem \ref{t1} and Theorem \ref{t2} provide the following complete characterization of the $2$-tone chromatic number of connected subcubic outerplanar graphs of order $n \ge 3$.

\begin{theorem}
Let $G$ be a connected subcubic outerplanar graph of order $n \ge 3$. Then
\[
\tau_2(G) =
\begin{cases}
5, & \text{if } G \text{ contains none of } C_3, C_4, C_7,\\
6, & \text{if } G \text{ contains one of } C_3, C_4, C_7 \text{ but not } K_4-e,\\
7, & \text{if } G \text{ contains } K_4-e.
\end{cases}
\]
\end{theorem}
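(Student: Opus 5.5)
The plan is to combine Theorems~\ref{t1} and~\ref{t2} with two standard lower bounds, together with the upper bound of Bickle for outerplanar graphs quoted at the start of this section. First, a lower bound valid for every connected graph of order $n\ge 3$: such a graph contains a path $P_3=xyz$. The labels of $x,y,z$ must satisfy $f(x)\cap f(y)=\emptyset$, $f(y)\cap f(z)=\emptyset$ and $f(x)\neq f(z)$. Hence $f(x)\cup f(z)$ has at least three colors, all disjoint from the two colors of $y$, so $\tau_2(G)\ge 5$.

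For the first case, when $G$ contains none of $C_3,C_4,C_7$, Theorem~\ref{t1} gives a $2$-tone $5$-coloring, so $\tau_2(G)=5$.

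For the second case, $G$ contains one of $C_3,C_4,C_7$. Lemma~\ref{l1} and the monotonicity of $\tau_2$ under subgraphs give $\tau_2(G)\ge 6$. Since $G$ contains no $K_4-e$, Theorem~\ref{t2} provides a good $2$-tone $6$-coloring, which is in particular a $2$-tone $6$-coloring. Thus $\tau_2(G)=6$.

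For the third case, $G$ contains $K_4-e$, and we need $\tau_2(G)\ge 7$. By monotonicity it suffices to use $\tau_2(K_4-e)=7$, stated in the introduction. For completeness, the lower bound can be checked directly. Let $a,b$ be the two adjacent vertices of degree three and $c,d$ the two nonadjacent ones. Then $a,b,c$ form a triangle and so use six distinct colors. The vertex $d$ is adjacent to $a$ and $b$, so its label avoids $f(a)\cup f(b)$. With only six colors this forces $f(d)=f(c)$, but $d_{K_4-e}(c,d)=2$, a contradiction.

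For the matching upper bound in the third case, we invoke Bickle's result that $\tau_2(G)\le 7$ for outerplanar $G$ with $\Delta(G)\le 3$ and $n\ge 4$; this applies since $K_4-e$ already has four vertices. Hence $\tau_2(G)=7$.

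The three cases are exhaustive. Note that containing $K_4-e$ implies containing $C_3$, so the third case is exactly what remains after the first two. The only real care needed is in this third case: the upper bound is not provided by Theorems~\ref{t1} and~\ref{t2} themselves and must be taken from the external result, and the lower bound needs the small argument above.
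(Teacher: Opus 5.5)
Your proposal is correct and follows essentially the same route as the paper: the $P_3$ lower bound; Theorem~\ref{t1} for the first case; Theorem~\ref{t2} with Lemma~\ref{l1} for the second; and $\tau_2(K_4-e)\ge 7$ with Bickle's bound $\tau_2(G)\le 7$ (for $n\ge 4$, $\Delta(G)\le 3$) for the third. The only difference is that you check the $P_3$ and $K_4-e$ lower bounds directly, where the paper simply quotes them, and both checks are right.
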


\begin{proof}
Since $G$ is connected with $n \ge 3$, it contains $P_3$ as a subgraph. As $\tau_2(P_3)=5$, it follows that $\tau_2(G)\ge 5$. 

\noindent If $G$ contains no cycle from $\{C_3, C_4, C_7\}$, Theorem~\ref{t1} guarantees a $2$-tone $5$-coloring, and hence $\tau_2(G) = 5$. Suppose $G$ contains such a cycle but does not contain $K_4 - e$. Then Theorem~\ref{t2} gives $\tau_2(G) \le 6$, while the presence of the cycle ensures $\tau_2(G) \ge 6$ (by Lemma~\ref{l1}), so $\tau_2(G) = 6$. Finally, if $K_4-e \subseteq G$, we have $\tau_2(G) \ge 7$ and $n \ge 4$, so the upper bound of Bickle~\cite{B2}, specialized to $\Delta(G) \le 3$, gives $\tau_2(G) \le 7$. Therefore, $\tau_2(G) = 7$, completing the proof.
\end{proof}

\begin{definition}
 A \emph{$3$-good $k$-coloring} of a graph $G$ is an assignment of a set of three colors from $\{1,2,\dots,k\}$ to each vertex of $G$ such that any two adjacent vertices share no colors, and any two vertices at distance two share exactly one color. 
\end{definition}

\begin{remark}
Any $3$-good $k$-coloring of a graph $G$ is a $3$-tone $k$-coloring of $G$. Indeed, since any two vertices at distance two share exactly one color, no two vertices at distance three share more than two colors.
\end{remark}

\begin{lemma}\label{l3}
Every cycle admits a $3$-good $11$-coloring.
\end{lemma}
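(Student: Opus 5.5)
We will prove Lemma~\ref{l3} by a uniform "shared color" construction that covers all sufficiently long cycles, and then handle the finitely many short cycles by explicit labelings, as in the proof of Lemma~\ref{l2}. Let $C_n=v_0v_1\dots v_{n-1}v_0$ with indices taken modulo $n$. For $n\le 5$ the distance structure is degenerate, so we treat these cases directly:
\begin{itemize}
\item $C_3$: the labels $123,456,789$ work.
\item $C_4$: we use $123,456,178,459$, so that opposite vertices share exactly one color.
\item $C_5$: every pair of non-adjacent vertices is at distance two, and we search for five triples in which consecutive triples are disjoint and non-consecutive triples meet in exactly one color. This is a small finite search inside $\{1,\dots,11\}$.
\end{itemize}

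For $n\ge 6$, the idea is to make the unique color shared by $v_i$ and $v_{i+2}$ an explicit parameter. We choose colors $c_0,\dots,c_{n-1}$ and "private" colors $z_0,\dots,z_{n-1}$, and set
\[
f(v_i)=\{c_{i-2},\,c_i,\,z_i\}.
\]
We split the palette into $A=\{1,\dots,6\}$ for the $c$'s and $B=\{7,\dots,11\}$ for the $z$'s. Writing out the two conditions of a $3$-good coloring gives the following requirements.
\begin{itemize}
\item Adjacent vertices $v_i,v_{i+1}$ are disjoint when $c_{i-2},c_{i-1},c_i,c_{i+1}$ are pairwise distinct and $z_i\ne z_{i+1}$.
\item Vertices $v_i,v_{i+2}$ share exactly the color $c_i$ when $c_{i-2}\ne c_{i+2}$ and $z_i\ne z_{i+2}$.
\end{itemize}
Thus it suffices that any five cyclically consecutive $c_j$ are distinct and any three consecutive $z_j$ are distinct. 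Equivalently, we need a proper $6$-coloring of the power $C_n^4$ and a proper $5$-coloring of $C_n^2$.

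We then invoke the known formula $\chi(C_n^k)=k+1+\lceil r/q\rceil$, valid for $n=q(k+1)+r$ with $0\le r\le k$ and $n\ge k+1$ (otherwise $C_n^k$ is complete). It gives $\chi(C_n^2)\le 5$ for all $n\ge 6$. It also gives $\chi(C_n^4)\le 6$ whenever $r\le q$ for $n=5q+r$, which holds for all $n\ge 20$ and for many smaller $n$. Alternatively, we avoid the citation and build the $c$-sequence directly by concatenating blocks $123456$ and $12345$, which is possible for every $n$ expressible as $5a+6b$, i.e.\ every $n\ge 20$ and several smaller values. The $z$-sequence is built similarly from blocks $789$ and $7891$-type patterns, using blocks of length $3$ and $4$ over $B$.

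The main obstacle is the remaining short cycles, namely the $n$ with $6\le n\le 19$ not of the form $5a+6b$, such as $n=7,8,9,13,14$. Here $\chi(C_n^4)$ can be $7$ or more, so the rigid split $6+5$ fails. My first attempt will be to let the two palettes overlap: some colors serve both as a $c$-color and as a $z$-color, subject to the requirement that no label's colors clash with the labels of its two neighbours and its two second neighbours. This is still a local constraint checkable on windows of five consecutive vertices. If that does not resolve every case quickly, I will simply give explicit $3$-good $11$-colorings for these finitely many $n$, as was done for $C_3,C_4,C_7$ in Lemma~\ref{l2}, verifying each by inspection.
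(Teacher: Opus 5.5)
Your ansatz $f(v_i)=\{c_{i-2},c_i,z_i\}$ is correct wherever it applies, but the proposal does not prove the lemma for every cycle. With $A=\{1,\dots,6\}$ you need $\chi(C_n^4)\le 6$, i.e.\ $r\le q$ for $n=5q+r$. This leaves $n\in\{7,8,9,13,14,19\}$ uncovered. For these lengths you only describe what you intend to try (overlapping palettes, or explicit labelings ``verified by inspection'') and produce nothing.

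This is not a formality. For $n=8,9$ the power $C_n^4$ is complete. With disjoint palettes your ansatz therefore needs $8+4$ resp.\ $9+3$ colors, i.e.\ $12>11$, and no choice of split rescues it.

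The base cases also have problems. Your $C_4$ labeling is invalid: $456$ and $459$ are at distance two and share both $4$ and $5$; use $4\,9\,10$ instead. $C_5$ is left as an unexecuted search, although your own scheme already covers it: $C_5^4=C_5^2=K_5$, so $5+5=10$ colors suffice. Also, ``$7891$'' should presumably read $78910$.

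The gap can be closed inside your framework. For $n\ge 6$, your own blocks of lengths $3$ and $4$ show that the $z$-sequence needs only four colors. So take $A=\{1,\dots,7\}$ and $B=\{8,\dots,11\}$. By the formula you quote, $\chi(C_n^4)\le 7$ iff $r\le 2q$, which covers $n=7,13,14,19$.

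For $n=8,9$, let the palettes overlap by setting $c_i=i$ and $z_i=c_{i+3}$, i.e.\ $f(v_i)=\{i-2,i,i+3\}$ (mod $n$). A difference check shows that for every $n\ge 8$ adjacent labels are disjoint and $f(v_i)\cap f(v_{i+2})=\{i\}$. This gives a $3$-good $n$-coloring.

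With these additions your direct construction becomes a complete alternative to the paper's proof. The paper instead inducts on $n$: it deletes $v_2$, adds the chord $v_1v_3$, and colors $C_{n-1}$. It then reinserts $v_2$ using the one color left unused by $f(v_1),f(v_3),f(v_4),f(v_n)$. Finally it repairs the label of $v_3$ so that it meets $f(v_1)$ in exactly one color.
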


\begin{proof}
We proceed by induction on the length $n$ of the cycle. For $n=3$ and $n=4$, define a $3$-good $11$-coloring $f$ as follows. If $n=3$, set $f(v_1)=123$, $f(v_2)=456$, and $f(v_3)=789$. If $n=4$, set $f(v_1)=123$, $f(v_2)=456$, $f(v_3)=178$, and $f(v_4)=4910$.
\noindent Assume now that every cycle of length less than $n$ admits a $3$-good $11$-coloring. Let $C = v_1v_2\ldots v_nv_1$ be a cycle of length $n>4$, and let $C' = (C - \{v_2\}) + v_1v_3$. By the induction hypothesis, $C'$ admits a $3$-good $11$-coloring $f$. Without loss of generality, we may assume that $f(v_1)=123$ and $f(v_3)=456$. Let $S=\{7,8,9,10,11\}$. Since $f$ is a $3$-good $11$-coloring of $C'$, $v_4$ shares exactly one color with $v_1$ and takes its two remaining colors from $S$. Hence, without loss of generality, we may set $f(v_4) = 178$. Similarly, $v_n$ shares exactly one color with $v_3$, say $4$, and takes its two remaining colors from $S$. Therefore, the four sets $f(v_1), f(v_3), f(v_4), f(v_n)$ together involve at most ten colors, leaving one unused in $\{1,\ldots, 11\}$, which we may assume, without loss of generality, to be $11$. Observe that $v_n$ may share two, one, or no colors with $v_4$. Without loss of generality, we may assume that $f(v_n) = 478$, $479$, or $4910$, respectively. In all cases, we assign $f(v_2) = 8911$. Note that $v_3$ has to be relabeled to share one color with $v_1$ in $C$. Let $x$ denote the other second neighbor of $v_3$ in $C$ distinct from $v_1$. Since $f$ is a $3$-good $11$-coloring, $x$ and $v_3$ share a common color; without loss of generality, let it be $4$. If $2\in f(x)$, we relabel $v_3$ by $256$; otherwise, by $425$. Then $C$ has a $3$-good $11$-coloring.\end{proof}

\begin{theorem}
    Every subcubic outerplanar graph $G$ has a $3$-good $11$-coloring.
\end{theorem}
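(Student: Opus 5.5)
We argue by minimal counterexample, following the same template as Theorem~\ref{t2}. Let $G$ be a subcubic outerplane graph with no $3$-good $11$-coloring and with the fewest vertices. Each component could be colored separately, so $G$ is connected. If $G$ has no bounded face, it is a tree. If it has exactly one bounded face and $\delta(G)\ge 2$, it is a cycle, which is handled by Lemma~\ref{l3}.

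\textbf{Step 1: $\delta(G)\ge 2$.} Suppose $u$ is a vertex of degree one with neighbor $v$, and color $G'=G-u$ by minimality. The label of $u$ must avoid $f(v)$ and must meet each neighbor $w$ of $v$ in $G'$ in exactly one color. There are at most two such $w$, and they are at distance at most two, so their labels are distinct.
\begin{itemize}
\item If $d_{G'}(v)=1$ with neighbor $v'$, choose one color of $f(v')$ and two colors outside $f(v)\cup f(v')$. At least five colors are free, so this works.
\item If $d_{G'}(v)=2$ with neighbors $v',v''$, then $|f(v')\cap f(v'')|\le 1$ if they are adjacent in $G'$, and $|f(v')\cap f(v'')|=1$ otherwise.
\begin{itemize}
\item If they share a color $c$, take $c$ plus two colors outside $f(v)\cup f(v')\cup f(v'')$. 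At most eight colors are used there, so this is possible.
\item If they are disjoint (adjacent), take one color from each plus one color outside all three labels. At most nine colors are used there.
\end{itemize}
\end{itemize}
The label of $u$ must also differ from those of the other vertices at distance two from $u$, and here the exact-intersection condition gives this automatically. Distance three imposes only $|\cdot\cap\cdot|\le 2$ in the $3$-tone sense, but the $3$-good definition places no condition there. So only the neighbors of $v$ constrain $u$.

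\textbf{Step 2: the pendant face.} Take a pendant face $F$ with $C(F)=v_1\dots v_\ell v_1$. Its degree-three vertices are either just $v_1$, or $v_1$ and $v_\ell$.

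First reduce the length. For $\ell\ge 5$, mimic Claim~3 of Theorem~\ref{t2} and Lemma~\ref{l3}. Contract out $v_2$ by setting $G'=(G-v_2)+v_1v_3$, color $G'$, and fix $f(v_1)=123$ and $f(v_3)=456$. The second neighbors of $v_2$ in $G$ are $v_4$, $v_\ell$ (or the outside neighbor $y$ of $v_1$), and possibly $y$. In $G'$ each of them is at distance two from $v_1$ or $v_3$. So each has exactly one color in $f(v_1)\cup f(v_3)$ and its other two colors in $S=\{7,\dots,11\}$. We need $f(v_2)\subseteq S$ meeting each of these three labels in exactly one color. Then $v_3$ is recolored, as in Lemma~\ref{l3}, so that it meets $v_1$ in exactly one color while keeping its single shared color with $v_5$, its only other second neighbor since $d(v_4)=2$. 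The new label for $v_3$ must stay disjoint from $f(v_2)$ and $f(v_4)$.

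Then handle $\ell=4$ and $\ell=3$ by deleting the degree-two vertices of $F$ and writing explicit labels relative to $f(v_1)$, $f(y_1)$, and $f(y_2)$. This is the pattern of Claim~4 and the triangle case of Theorem~\ref{t2}. Since eleven colors leave plenty of room, each case should be a small explicit assignment.

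\textbf{Main obstacle.} The hard part is the $\ell\ge 5$ step when $v_1$ has an outside neighbor $y$. There $f(v_2)$ is a $3$-subset of the $5$-set $S$ that must meet three $2$-subsets of $S$ (from $v_4$, $v_\ell$, $y$) each in exactly one element. I would try a short case analysis on how these three pairs overlap inside $S$. If a configuration fails, say three pairwise disjoint-ish pairs forcing a conflict, I would use the freedom to permute colors within $S$ before fixing labels. Failing that, I would recolor $v_4$ or $v_\ell$, which have degree two, among their admissible choices. Triangles are allowed in $G$ here, even $K_4-e$, so the case $y_1=y_2$ in the small faces must also be checked. With three colors per vertex, the extra slack should make these cases go through.
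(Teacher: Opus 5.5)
\textbf{There is a genuine gap.} Your overall template (minimal counterexample, $\delta(G)\ge 2$, a pendant face $F$, shorten $F$, then settle $\ell=4$ and $\ell=3$) is the same as the paper's, and your Step~1 is correct. But the proposal stops exactly where the content of the proof lies.

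For $\ell\ge 5$ you delete $v_2$, the vertex next to the degree-three vertex $v_1$. In $G$ this vertex has three second neighbors, $v_4$, $v_\ell$ and $y$. So you need a $3$-set $A\subseteq S=\{7,\dots,11\}$ meeting each of their two-element $S$-parts in exactly one color, and this can genuinely fail. Take $\ell\ge 7$, where $v_4$ is at distance three from both $v_\ell$ and $y$ in $G'$. The labels $f(v_4)=178$, $f(v_\ell)=489$, $f(y)=579$ satisfy every adjacency and distance-two condition among $v_1,v_3,v_4,v_\ell,y$ in $G'$. Yet no $3$-subset of $S$ meets each of $\{7,8\},\{8,9\},\{7,9\}$ in exactly one element. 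Permuting colors inside $S$ cannot remove this obstruction, since it is invariant under such permutations. Your fallback of recoloring $v_4$ or $v_\ell$ is never carried out, and any such recoloring interacts with the later recoloring of $v_3$. So the reduction of $\ell$ is not established. The cases $\ell=4$ and $\ell=3$ (including $d_G(v_4)=3$ when $\ell=4$, and $y_1=y_2$ for triangles) are also only asserted to be ``small explicit assignments'', not verified.

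The missing idea is the choice of which vertex to remove. The paper takes $G'=(G-v_3)+v_2v_4$. Both neighbors of $v_3$ have degree two, so in $G$ the vertex $v_3$ has only the two second neighbors $v_1$ and $v_5$. Normalize $f(v_2)=123$, $f(v_4)=456$, $f(v_5)=178$ and $f(v_1)\in\{478,479,4\,9\,10\}$; then a label such as $8\,9\,11$ always works for $v_3$.

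What remains is to relabel $v_4$ so that it meets $f(v_2)$ in exactly one color while staying disjoint from $f(v_3)\cup f(v_5)$. Whenever $v_4$ has only two second neighbors, this is the same swap as in Lemma~\ref{l3}. The one exceptional situation is $\ell=5$ with $d_G(v_5)=3$. It is handled by a short case analysis on the third neighbor $x$ of $v_5$: whether $x$ is adjacent to $v_1$ or at distance two from it, and which color it then shares with $v_1$.

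Note also that in Theorem~\ref{t2} deleting $v_2$ worked only because, with six colors, $|S|=2$ forces $f(v_2)=56$. That rigidity disappears for $3$-good $11$-colorings, which is why the reduction has to be moved one vertex further along $F$.
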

 
\begin{proof} 
Suppose, for the sake of contradiction, that the statement is false, and let $G$ be a counterexample of minimum order. Clearly, $G$ is connected.
 
\begin{claim}
    $\delta(G) \ge 2$.
\end{claim}

\begin{proof}
Assume for contradiction that $G$ contains a vertex $u$ of degree one, and let $v$ be its single neighbor. Consider the graph $G' = G - \{u\}$. By the minimality of $G$, the smaller graph $G'$ admits a $3$-good $11$-coloring, denoted by $f$.\\
\noindent If $v$ has only one neighbor $v'$ in $G'$, then choose one color from the label of $v'$ and select two from the remaining colors from $\{1,\dots,11\}$ not used by $f(v)$ and $f(v')$, and assign these three colors to $u$. This yields a valid extension of $f$, a contradiction.\\
\noindent Suppose instead that $v$ has two distinct neighbors, $v'$ and $v''$. Since the distance between $v'$ and $v''$ is at most two, their two labels are distinct. We may then assign to $u$ three colors: $a \in f(v')$, $b \in f(v'')$ with $a \ne b$, and $c \in \{1,\dots,11\}\setminus \bigl(f(v)\cup f(v')\cup f(v'')\bigr)$. This extends the coloring to a $3$-good $11$-coloring of $G$, yielding a contradiction.\end{proof}

\noindent Note that $G$ must contain at least two bounded faces; otherwise, $G$ would be a cycle, which admits a $3$-good $11$-coloring by Lemma \ref{l3}, a contradiction. Let $F$ be a pendant face in $G$ with $C(F) = v_1v_2 \dots v_\ell v_1$, where $\ell \ge 3$. By the definition of a pendant face in a subcubic outerplanar graph, we assume that $v_1$ and possibly $v_\ell$ are the only vertices of degree $3$, with all others having degree $2$.

\begin{claim}
   $\ell \leq 4$.
\end{claim}

\begin{proof}
Suppose, for a contradiction, that $\ell \ge 5$. Consider the graph 
$G' = (G - \{v_3\}) + v_2v_4$. By the minimality of $G$, the graph $G'$ admits a $3$-good $11$-coloring $f$. Without loss of generality, assume that $f(v_2) = 123$ and $f(v_4) = 456$, and let $S = \{7,8,9,10,11\}$. Since $f$ is a $3$-good $11$-coloring of $G'$, the vertex $v_5$ shares exactly one color with $v_2$ and takes its two remaining colors from $S$. Thus, we may assume $f(v_5) = 178$. Similarly, $v_1$ shares exactly one color with $v_4$, say $4$, and takes its two remaining colors from $S$. Hence, the sets $f(v_1), f(v_2), f(v_4)$, and $f(v_5)$ together use at most ten colors, leaving one unused color from $\{1,\dots,11\}$, which we may assume to be $11$. Observe that $v_1$ may share two, one, or no colors with $v_5$. Accordingly, we may assume that 
$f(v_1) = 478$, $479$, or $4910$, respectively. In all cases, we assign $f(v_3) = 8910$.

\noindent It remains to ensure that $v_2$ and $v_4$ share a color in $G$, which we achieve by relabeling $v_4$. Note that $d_G(v_5) \ge 2$.

\noindent Suppose first that $d_G(v_5)=2$, and let $x$ be a neighbor of $v_5$ distinct from $v_4$. In this case, $x \in \{v_1, v_6\}$, and $x$ together with $v_2$ are the only second neighbors of $v_4$ in $G$. Since $f$ is a $3$-good $11$-coloring, $v_4$ and $x$ share exactly one color. Without loss of generality, assume that this color is $4$. If $2 \in f(x)$, we relabel $f(v_4)$ as $256$; otherwise, we relabel it as $245$. In either case, the coloring extends to $G$, yielding a contradiction.

\noindent Thus, $d_G(v_5)=3$. It follows that $\ell=5$, and hence $v_1$ and $v_5$ are adjacent, which fixes $f(v_1)=4910$. Let $x$ be the third neighbor of $v_5$ in $G$, distinct from $v_1$ and $v_4$. In this case, $v_4$ has exactly three second neighbors in $G$, which are $v_1$, $v_2$, and $x$. Observe that $v_1$ and $v_4$ share the color $4$. Moreover, $x$ and $v_4$ share exactly one common color; denote it by $a$.

\noindent Note that $v_1$ and $x$ are not adjacent. Otherwise, $x$ and $v_2$ would be at distance two in $G$, and hence must share exactly one common color other than $1$. Without loss of generality, assume this color is $2$. Then $3 \notin f(x)$. Moreover, $a \neq 4$; assume without loss of generality that $a=5$. Now relabel $v_4$ with $345$, which extends $f$ to $G$, a contradiction.

\noindent Thus, $x$ and $v_1$ are at distance two in $G$, and hence must share exactly one common color. First, suppose that this common color is $4$. If $2 \in f(x)$, we relabel $f(v_4)$ as $2510$; otherwise, we relabel it as $245$. In either case, $f$ extends to $G$, a contradiction.

\noindent Therefore, we may assume, without loss of generality, that $x$ shares the color $5$ with $v_4$ and the color $9$ with $v_1$. Consequently, $|\{2,3\} \cap f(x)| \leq 1$. Suppose, without loss of generality, that $2 \notin f(x)$. We can then relabel $f(v_4)$ as $245$, which once again leads to a contradiction.\end{proof}

\begin{claim}
    $\ell= 3$
\end{claim}

\begin{proof}
Suppose, to the contrary, that $\ell= 4$; then let $G' = (G - \{v_2\}) + v_1v_3$. Without loss of generality, suppose $f(v_1) = 123$ and $f(v_3) = 456$. Set, without loss of generality, $f(v_4) = 789$. Let $y$ be the neighbor of $v_1$ in $G$ other than $v_2$ and $v_4$. Since $f$ is a $3$-good $11$-coloring, $y$ shares exactly one color with $v_3$. Suppose $v_4$ and $y$ are not adjacent; then $y$ also shares exactly one color with $v_4$. Hence, without loss of generality, let $f(y) = 4710$. If $d_G(v_4)=2$, then relabel $v_3$ as $156$ and label $v_2$ as $81011$, a contradiction. Thus, let $y'$ be the third neighbor of $v_4$. Since $G'$ admits a $3$-good $11$-coloring, $y'$ shares one color with $v_1$, say $1$, and one with $v_3$, say $a$. Relabel $f(v_3)$ by replacing one of its colors, distinct from $a$, with $2$, and assign $f(v_2)=81011$, yielding a contradiction. Thus, $v_4$ and $y$ are adjacent. Therefore, without loss of generality, let $f(y) = 41011$. Hence, we can relabel $v_3$ as $426$ and then label $v_2$ as $5710$, a contradiction.\end{proof}

\noindent Then $F$ is a triangle. Suppose first, without loss of generality, that $d_G(v_1)=3$ and $d_G(v_2)=d_G(v_3)=2$. Let $G' = G - \{v_2, v_3\}$. By the minimality of $G$, the graph $G'$ admits a $3$-good $11$-coloring $f$. Let $y$ denote the neighbor of $v_1$ in $G'$. Suppose, without loss of generality, that $f(v_1)=123$ and $f(y)=456$. Assigning the labels $478$ and $5910$ to $v_2$ and $v_3$, respectively, yields a $3$-good $11$-coloring of $G$, a contradiction.

\noindent Therefore, we may assume, without loss of generality, that $d_G(v_1)=d_G(v_2)=3$ and $d_G(v_3)=2$. Let $G' = G - \{v_3\}$. By the minimality of $G$, the graph $G'$ admits a $3$-good $11$-coloring $f$. Suppose, without loss of generality, that $f(v_1)=123$ and $f(v_2)=456$. Let $y_1$ and $y_2$ be the neighbors of $v_1$ and $v_2$, not in $C(F)$, respectively. If $y_1=y_2$, then we can set $f(y_1)=789$. Now, assign the label $71011$ to $v_3$, a contradiction. Thus, $y_1$ and $y_2$ are distinct. Let $S=\{7,8,9,10,11\}$. Because $G'$ admits a $3$-good $11$-coloring, $y_2$ shares exactly one color with $v_1$ and takes its two remaining colors from $S$. Hence, without loss of generality, we may set $f(y_2) = 178$. Similarly, $y_1$ shares exactly one color with $v_2$, say $4$, and takes its two remaining colors from $S$. Therefore, the four sets $f(v_1), f(v_2), f(y_1), f(y_2)$ together involve at most ten colors, leaving one unused in $\{1,\dots,11\}$, which we may assume, without loss of generality, to be $11$. Observe that $y_1$ may share two, one, or no colors with $y_2$. Without loss of generality, we may assume that $f(y_1) = 478$, $479$, or $4910$, respectively. In all cases, we assign $f(v_3) = 8911$ and obtain a $3$-good $11$-coloring of $G$, a contradiction.\end{proof}

\noindent As a consequence of the preceding result, we establish that every subcubic outerplanar graph $G$ satisfies $\tau_3(G) \le 11$. This finding is particularly significant when compared to the broader class of general subcubic graphs, for which the best-known upper bound for the $3$-tone chromatic number is $21$ \cite{Dong2}.

\noindent The bound $\tau_3(G) \le 11$ is sharp. This is evidenced by the fact that $K_4-e$ does not admit a 3-tone 10-coloring. A natural follow-up question is whether excluding $K_4-e$ as a subgraph allows a reduction of the 3-tone chromatic number to $10$. However, this is not the case; the graph illustrated in Figure 1 serves as a counterexample.

\begin{figure}[h]
\centering
\begin{tikzpicture}[scale=1,
every node/.style={circle, fill=black, inner sep=2pt},
label distance=2pt]

\node[label=above left:$v_1$] (v1) at (0,1) {};
\node[label=above:$v_2$]      (v2) at (1,2) {};
\node[label=above:$v_5$]      (v3) at (3,2) {};
\node[label=below right:$v_4$](v4) at (3,0) {};
\node[label=below:$v_3$]      (v5) at (1,0) {};

\draw (v1) -- (v2);
\draw (v2) -- (v3);
\draw (v3) -- (v4);
\draw (v4) -- (v5);
\draw (v5) -- (v1);
\draw (v2) -- (v5);

\end{tikzpicture}
\caption{A subcubic outerplanar graph that is not $3$-tone $10$-colorable.}
\label{f1}
\end{figure}
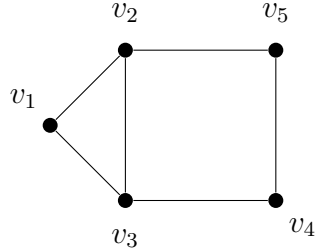
\noindent Since the vertices $v_1, v_2,$ and $v_3$ induce a $K_3$, their color sets must be pairwise disjoint, requiring nine distinct colors. Because $v_4$ is at distance two from $v_1$ and $v_2$, it can receive exactly two colors from the triangle's palette, requiring at least one new color. By symmetry, $v_5$ also requires a new color. Since $v_4$ and $v_5$ are adjacent, these two additional colors must be distinct. Consequently, $\tau_3(G) \ge 9 + 2 = 11$.  

\noindent We conclude this section by proposing a relationship between $\tau_t(G)$ and $\tau_{t-1}(G)$ for subcubic outerplanar graphs.

\begin{conjecture}
For every subcubic outerplanar graph $G$ and every integer $t\ge 3$, \[\tau_t(G)\le \tau_{t-1}(G)+t+1.\]
\end{conjecture}

\section{$2$-tone coloring of Halin graphs}

This section studies the $2$-tone chromatic number of Halin graphs. We first show that every cubic Halin graph of order $n \ge 6$ is $2$-tone $7$-colorable and then derive an upper bound on the $2$-tone chromatic number for Halin graphs with arbitrary maximum degree. We remark that, by definition, every Halin graph $H$ has maximum degree $\Delta(H)\ge 3$.

\subsection{Cubic Halin Graphs}

\noindent We begin with the cubic case by recalling the following definitions and a known result on trees, which will be used in the proof of our main theorem.

\begin{definition}[\cite{C1}]
A {\em partial $t$-tone $k$-coloring} of a graph $G$ is a function $f: S \rightarrow {[k] \choose t}$, with $S \subseteq V(G)$, such that $|f(u) \cap f(v)| < d_G(u,v)$ whenever $u,v \in S$. 
\end{definition}

\begin{definition}[\cite{C1}]
Let $f$ be a partial $2$-tone coloring of a graph $G$, and let $v$ be an uncolored vertex of $G$. A color is said to be \emph{free at $v$} if it does not appear in the label of any neighbor of $v$. The labels obtained using only free colors at $v$ are called \emph{candidate} labels for $v$.
\end{definition}

\begin{lemma}[\cite{BP,F}]\label{l4}
For any nontrivial tree $T$ with a maximum degree $\Delta$, 
\[
\tau_2(T) = \left\lceil \frac{5 + \sqrt{8\Delta + 1}}{2} \right\rceil.
\]
\end{lemma}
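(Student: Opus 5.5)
The lemma concerns trees; it is due to \cite{BP,F}, but here is how I would prove it. Write $k_\Delta=\left\lceil \frac{5+\sqrt{8\Delta+1}}{2}\right\rceil$. For $\Delta\ge 1$, $k_\Delta$ is the least integer $k$ with $\binom{k-2}{2}\ge \Delta$, since solving $(k-2)(k-3)/2\ge\Delta$ gives exactly $k\ge \frac{5+\sqrt{8\Delta+1}}{2}$. The proof therefore splits into a lower bound from a single star and an upper bound by a greedy extension along a rooted tree. One caveat: when $\Delta=1$ the tree is $K_2$ and $\tau_2(K_2)=4$, while the formula gives $\lceil (5+3)/2\rceil=4$. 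So the formula is also correct there, and no separate case is needed.

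\emph{Lower bound.} Let $v$ have degree $\Delta$, and suppose $f$ is a $2$-tone $k$-coloring. The neighbors of $v$ are pairwise at distance two, so they receive pairwise distinct labels. Each of these labels avoids the two colors of $f(v)$, so every neighbor label lies in $\binom{[k]\setminus f(v)}{2}$, which gives $\Delta\le\binom{k-2}{2}$. Consequently $k\ge k_\Delta$. For small $\Delta$ one should check this bound against the trivial $\tau_2\ge 4$ (an edge) and $\tau_2(P_3)=5$. These hold automatically, since $k_1=4$ and $k_2=5$, and $k_\Delta\ge 5$ for all $\Delta\ge 2$.

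\emph{Upper bound.} Set $k=k_\Delta$. I would root $T$ at a vertex $r$ and color in breadth-first order, maintaining the invariant that the colored part is a valid partial $2$-tone coloring. In a tree, the distance constraints that matter when coloring a vertex $u$ with parent $p$ and grandparent $g$ are the following: $f(u)\cap f(p)=\emptyset$; $f(u)\neq f(g)$; and $f(u)$ differs from the labels of its already colored siblings. Vertices at distance at least $3$ impose no constraint, because $|f(u)\cap f(w)|\le 2<3$. Rather than coloring children one at a time, I would color all children of $p$ simultaneously. There are at most $\Delta-1$ of them when $p\ne r$, and $\Delta$ when $p=r$. They need pairwise distinct labels from $\binom{[k]\setminus f(p)}{2}$, all different from $f(g)$.

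The label $f(g)$ is disjoint from $f(p)$, so it is one of the $\binom{k-2}{2}$ available labels. Excluding it leaves $\binom{k-2}{2}-1\ge\Delta-1$ labels, which is enough for the at most $\Delta-1$ children of a non-root vertex. At the root there is no grandparent, and $\binom{k-2}{2}\ge\Delta$ labels suffice. The root itself takes any label, and the children of the root are handled as just described.

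The main point to verify carefully, and the only possible obstacle, is that no other constraint arises. I would check it as follows. Two children of different parents are at distance at least $4$ if their parents are distinct non-siblings, and at distance $4$ if their parents are siblings; they are never at distance $2$. A vertex $u$ is at distance $2$ only from its grandparent, its siblings, and its own grandchildren, and the grandchildren are colored later against $f(u)$. Hence the greedy procedure succeeds, giving $\tau_2(T)\le k_\Delta$, and combined with the lower bound this establishes equality.
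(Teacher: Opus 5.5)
The paper gives no proof of this lemma; it is quoted from the literature. Your argument is correct and is the standard proof. The star at a vertex of degree $\Delta$ forces $\Delta\le\binom{k-2}{2}$. Labeling the children of each vertex $p$ (with parent $g$) level by level, using distinct members of $\binom{\{1,\dots,k\}\setminus f(p)}{2}\setminus\{f(g)\}$, shows this condition is also sufficient.

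One sentence needs tightening. Your claim that children of distinct non-sibling parents are at distance at least $4$ holds only for vertices at the same depth: a vertex and its grandparent have distinct, non-sibling parents but are at distance $2$. Your next sentence, identifying the second neighbors of $u$ as its grandparent, siblings and grandchildren, is the accurate statement, and it is all the argument actually uses.
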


\begin{theorem}\label{t1'}
Every cubic Halin graph of order $n \ge 6$ is $2$-tone $7$-colorable.
\end{theorem}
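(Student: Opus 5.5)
We propose to color the tree first and the cycle last, in the spirit of Cranston--Kim--Kinnersley's partial-coloring arguments. Write $H=T\cup C$, where $T$ is a cubic tree: internal vertices have degree three, and the leaves are exactly the vertices of $C$. Since $n\ge 6$, $T$ has at least two internal vertices, so $H\neq K_4$.

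\textbf{Step 1: color the internal vertices.} Let $T_0$ be the tree induced by the internal vertices of $T$; its maximum degree is at most $3$. Color $T_0$ with labels from $\{1,\dots,7\}$ so that two conditions hold. First, adjacent vertices get disjoint labels. Second, vertices at distance two in $H$ get distinct labels. Distances inside $T_0$ equal distances in $H$ up to $2$ only when no path through $C$ shortcuts them. Two internal vertices at distance two via leaves would need a common leaf neighbour, which is impossible. So it suffices to color $T_0$ as a tree. By Lemma~\ref{l4}, $\tau_2(T_0)\le \lceil (5+5)/2\rceil=5$ when $\Delta(T_0)=3$. We would aim to use only five colors here, e.g.\ $\{1,\dots,5\}$, leaving $6,7$ in reserve. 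It may be more convenient instead to color greedily in BFS order, reserving flexibility.

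\textbf{Step 2: color the leaves along $C$.} Each leaf $u$ has one parent $p(u)$ in $T$ and two cycle neighbours. Its constraints come from three sources: disjointness from $f(p(u))$ and from its two cycle neighbours; distinctness from its distance-two vertices, namely the other children and the parent of $p(u)$, plus the leaves at distance two on $C$ and the parents of its cycle neighbours. A leaf thus has at most $3$ neighbours and at most about $6$ second neighbours. Out of $\binom{7}{2}=21$ labels, disjointness from $f(p(u))$ alone leaves $\binom{5}{2}=10$ candidates. We would go around $C$ sequentially. At an intermediate leaf, only the preceding cycle neighbour is colored; the candidates number at least $\binom{3}{2}=3$ in the worst case, and more when labels overlap. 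The second-neighbour restrictions cut this down. To keep enough freedom, we would exploit the structure of Halin graphs: every internal vertex adjacent to two leaves has those leaves consecutive on $C$, and there are at least two such ``leaf-pairs'' (fans). We would also consider choosing the labels of $T_0$ so that the parent of consecutive leaves uses colors overlapping with the reserve, which increases the free colors.

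\textbf{Main obstacle: closing the cycle.} The last leaves have both cycle neighbours colored and may have too few candidates. We would handle this by starting the cycle at a fan: a vertex $w$ of $T_0$ adjacent to two consecutive leaves $a,b$. Color all other leaves first, going around from $b$'s far side to $a$'s far side. Then color $a,b$ last, treating the configuration $\{w,a,b\}$ as a reducible triangle, much as in the triangle case of Theorem~\ref{t2}. If necessary, also uncolor and recolor $w$. The needed count is a careful Hall-type check: the triangle $wab$ must receive pairwise disjoint labels (six colors) avoiding the neighbours' colors. This is where we expect most case analysis. A fallback is a minimal-counterexample argument that contracts a fan $w,a,b$ to a single leaf. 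This gives a smaller cubic Halin graph whose coloring we extend; order $6$, the prism, and order $8$ are then checked by explicit labelings.
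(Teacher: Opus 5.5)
\paragraph{Verdict.} There is a genuine gap. Your architecture matches the paper's: colour the internal tree from $\{1,\dots,5\}$ via Lemma~\ref{l4}, colour the leaves around $C$, and close at a fan, recolouring its centre if needed. But both steps that carry the proof are left open, and the count you give for Step~2 does not close.

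\paragraph{Step 2: the missing idea is the $L$-label rule.} An intermediate leaf $u$ may have only $\binom{3}{2}=3$ candidates. It can have up to five coloured second neighbours: $u^{--}$, $p(u^-)$, $p(u^+)$, and the two other neighbours of $p(u)$. For example, if $u^-$ had received $67$, then all three candidates of $u$ lie inside $\{1,\dots,5\}\setminus f(p(u))$. The internal neighbours of $p(u)$ are forced to carry labels from exactly that set, so the internal second neighbours can block all three candidates.

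The paper's fix is to give every leaf coloured along the way exactly one colour from $\{6,7\}$ and one from $\{1,\dots,5\}$. This has three consequences:
\begin{itemize}
\item Since internal labels avoid $6$ and $7$, no internal vertex can ever block such a leaf.
\item The only coloured leaf at distance two from $u$ is $u^{--}$.
\item Since $u^-$ is itself an $L$-leaf, exactly one reserve colour and at least two colours of $\{1,\dots,5\}$ are free at $u$, so $u$ has at least two candidates against at most one blocker.
\end{itemize}
Your aside about letting fan parents use reserve colours would destroy exactly this separation.

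\paragraph{Closing the cycle.} You leave this as an unspecified ``Hall-type check''. The same rule resolves it: it guarantees that the label $67$ occurs nowhere, so the fan centre can be recoloured $67$. The paper colours one fan leaf $x_1$ first with a $\{1,\dots,5\}$-label. It then finishes the last leaf $x_\ell$ by a short case analysis after setting $f(x)=67$, sometimes also relabelling $x_1$.

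Your variant, with both $a$ and $b$ coloured last, also works once you have this. Let $q$ be the parent of $w$, and let $c_a$ and $c_b$ be the non-reserve colours of $a^-$ and $b^+$. With $f(w)=67$, the labels of $a$ and $b$ must:
\begin{itemize}
\item be disjoint $2$-subsets of $\{1,\dots,5\}$;
\item avoid $c_a$ (for $a$) and $c_b$ (for $b$);
\item differ from $f(q)$ and $f(p(a^-))$ (for $a$), and from $f(q)$ and $f(p(b^+))$ (for $b$).
\end{itemize}
All constraints with other leaves hold automatically. There are $6$ admissible ordered pairs if $c_a=c_b$ and $12$ otherwise, and these forbidden labels exclude at most $4$, respectively $9$, of them.

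\paragraph{Your fallback fails at seven colours.} Without recolouring $w$, the closing can genuinely fail. After contracting the fan, suppose the colouring of $H'$ has $f(w)=12$, $f(a^-)=34$ and $f(b^+)=56$. Then $a$ must avoid $1,2,3,4$ and differ from $f(b^+)=56$, so $7\in f(a)$. Now $b$ must avoid $1,2,5,6,7$, which forces $f(b)=34=f(a^-)$, although $b$ and $a^-$ are at distance two. So no extension keeping $f(w)$ exists, and checking small base cases does not repair the inductive step.
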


\begin{proof}

Let $H = T \cup C$ be a cubic Halin graph with $n \ge 6$ vertices, where $C = x_1 x_2 \cdots x_\ell x_1$. Define $T' = T -\{x_1, \dots, x_\ell\}$. Then $T'$ is a tree with maximum degree at most $3$. By Lemma~\ref{l4}, $T'$ admits a $2$-tone $5$-coloring $f$. For any two distinct vertices $u,v \in V(T')$ with $d_H(u,v)\le 2$, we have $d_{T'}(u,v)\le d_H(u,v)$. Hence, $f$ is a partial $2$-tone $5$-coloring of $H$.

\noindent We observe that the tree $T$ must contain at least two internal vertices. If this were not the case, $H$ would be isomorphic to $K_4$, contradicting the assumption $n \ge 6$. Root the tree $T$ at an arbitrary internal vertex, and let $x$ be an internal vertex at maximum distance from the root in $T$. This selection, combined with the property that $H$ is a cubic Halin graph, implies that $x$ has exactly one internal neighbor (its parent, say $y$) and adjacent to exactly two leaves of $T$ that necessarily appear consecutively on $C$. Without loss of generality, let these two leaves be $x_1$ and $x_\ell$. For $i \in \{2, \dots, \ell-1\}$, we denote by $y_i$ the parent of $x_i$ in $H$.

\noindent First, we extend the coloring $f$ to $x_1$. This vertex has exactly one colored neighbor, namely its parent $x$, and $f(x) \subseteq \{1,\dots,5\}$.
Consequently, at least three colors from $\{1,\dots,5\}$ are free at $x_1$, yielding at least three candidate labels for $x_1$. Moreover, among the second neighbors of $x_1$ in $H$, at most two are internal vertices, namely $y$ and $y_2$. It follows that $x_1$ can be assigned one of its candidate labels.

\noindent Let us define an \emph{$L$-label} as a set consisting of exactly one color from $\{6,7\}$ and exactly one color from $\{1,\dots,5\}$. Accordingly, a leaf assigned an $L$-label is referred to as an \emph{$L$-leaf}. We color the vertices $x_2,\dots,x_{\ell-1}$ iteratively in cyclic order, assigning each vertex an $L$-label so that, after each step, the coloring remains valid. In particular, upon completion of this process, the coloring restricted to $V(H)\setminus\{x_\ell\}$ is a partial $2$-tone $7$-coloring of $H$.

\noindent We now consider $x_2$. At this stage, $x_3$ is uncolored; hence, both colors $6$ and $7$ are free at $x_2$. Since $f(y_2) \cup f(x_1) \subseteq \{1,\dots,5\}$, there exists at least one color in $\{1,\dots,5\}$ that is free at $x_2$. Therefore, $x_2$ admits at least two candidate $L$-labels. As no $L$-label has been assigned yet, $x_2$ has no second neighbor that is an $L$-leaf. Thus, $x_2$ can be assigned one of its candidate $L$-labels without conflict.

\noindent Now let $i \in \{3,\dots,\ell-1\}$ and suppose that the vertices
$x_2,\dots,x_{i-1}$ have already been assigned $L$-labels. At this stage, the vertex $x_{i+1}$ is uncolored. Since $f(y_i) \subseteq \{1,\dots,5\}$ and $x_{i-1}$ is an $L$-leaf, there are at least two free colors from $\{1,\dots,5\}$ and exactly one free color from $\{6,7\}$ at $x_i$, yielding at least two candidate $L$-labels for $x_i$. We next examine the second neighbors of $x_i$ that are leaves. Each of $x_{i-1}$ and $x_{i+1}$ is adjacent to exactly one leaf other than $x_1$. Moreover, since $H$ is a Halin graph, any leaf adjacent to $y_i$ must be either $x_{i-1}$ or $x_{i+1}$. Hence, $x_i$ has at most two leaves as second neighbors, namely $x_{i-2}$ and $x_{i+2}$. Among these, at most one can be an $L$-leaf: if such a vertex exists, it must be $x_{i-2}$, since $x_{i+2}$ is either uncolored at this stage or coincides with $x_1$. Therefore, $x_i$ has at most one second neighbor that is an $L$-leaf, and consequently, $x_i$ can be assigned one of its candidate $L$-labels without conflict.

\noindent It remains to assign a valid label to the final leaf, $x_\ell$. Let $L$ denote the $L$-label previously assigned to $x_{\ell-1}$. Without loss of generality, we may assume the following assignments: $f(x) = 12$, $f(x_1) = 34$, and $f(y) = 35$. Additionally, we assume $6 \in L$. We consider two cases according to whether $L$ uses a color from the set $\{1,2\}$ or not.

\noindent Case 1. $1,2\notin L$. Equivalently $L$ is one of $\{36,46,56\}$.\\
\noindent Relabel $x$ by $67$; this is possible as no vertex is assigned the label 67. If $f(y_{\ell-1})\neq 12$, then label $x_\ell$ by $12$, and we are done. So suppose $f(y_{\ell-1})=12$. Since $x_1$ is adjacent to the $L$-leaf $x_2$ and has at most one internal second neighbor, namely $y_2$, whose label may contain $1$ or $2$, at least one label from $\{13,14,23,24\}$ can be assigned to $x_1$. Without loss of generality, assign $23$ to $x_1$. Now, if $L\in\{36,56\}$, label $x_\ell$ by $14$, and if $L=46$, label $x_\ell$ by $15$.

\noindent Case 2. $L$ contains one color from $\{1,2\}$. Assume, without loss of generality, $1\in L$; that is, $L=16$.\\
\noindent Relabel $x$ by $67$ as in Case 1. If $f(y_{\ell-1})\neq 25$, then label $x_\ell$ by $25$ and finish. So suppose $f(y_{\ell-1})=25$. Consider now the $L$-label of $x_1$. If the $L$-label of $x_2$ does not contain $2$ and $f(y_2)\neq 25$, then we may relabel $x_1$ by $25$, and then label $x_\ell$ by $34$. Hence, assume instead that either the $L$-label of $x_2$ contains $2$ or $f(y_2)=25$. Suppose first that the $L$-label of $x_2$ contains $2$. Since $x_1$ has at most one internal second neighbor, $y_2$, whose label may contain $1$ or $2$, one label from $\{13,14\}$ can be assigned to $x_1$. Without loss of generality, assign $13$ to $x_1$; then label $x_\ell$ by $24$. Thus, $f(y_2)=25$, relabel $x_1$ by $45$, and label $x_\ell$ by $23$. \end{proof}

\noindent This result confirms part (2) of Conjecture~\ref{conj} for cubic Halin graphs of order $n \ge 6$. However, no cubic Halin graph requiring seven colors is currently known, suggesting that the above bound may not be tight. This observation motivates the following conjecture.

\begin{conjecture}\label{c}
Every cubic Halin graph of order $n\ge 6$ is $2$-tone $6$-colorable.
\end{conjecture}

\noindent Since cubic Halin graphs are $(K_4-e)$-free, the above conjecture, if confirmed, would confirm part (3) of Conjecture~\ref{conj} for this class of graphs.

\subsection{Halin Graphs}

\noindent For planar graphs, Cranston and LaFayette~\cite{C2} proved that for any planar graph $G$, the $2$-tone chromatic number satisfies
\[
\tau_2(G) \le 
\left\lfloor \sqrt{4\Delta(G)+50.25}+31.1 \right\rfloor
\le
\left\lfloor \sqrt{4\Delta(G)}+36.5 \right\rfloor .
\]
Moreover, for sufficiently large $\Delta(G)$, they established the stronger bound
\[
\tau_2(G)\le 
\max\left\{41,\left\lfloor \sqrt{4\Delta(G)+50.25}+11.5 \right\rfloor \right\}.
\]
They further conjectured~\cite{C2} that there exists a constant $C$ such that $\tau_2(G) \le \sqrt{3\Delta(G)} + C$ for every planar graph $G$. 

\noindent We start with a lemma for the wheel graph $W_d$, which is a basic special case of Halin graphs. 

\begin{lemma}(\cite{B1})\label{l5}
For the wheel graph $W_d = C_d + K_1$, the 2-tone chromatic number is:
\[
\tau_2(W_d)=
\begin{cases}
7, & \text{if } d \in \{5, 6, 8, 9\},\\[2mm]
8, & \text{if } d \in \{3, 4, 7, 10, \dots, 15\},\\[2mm]
\left\lceil \dfrac{5+\sqrt{1+8d}}{2} \right\rceil, & \text{if } d \ge 11.
\end{cases}
\]
\end{lemma}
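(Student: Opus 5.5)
The plan is to reduce the problem to finding cycles in Kneser graphs. Write $W_d$ as a hub $h$ joined to a rim cycle $C=u_1u_2\cdots u_du_1$. Any two rim vertices are at distance at most two, through $h$. So in a $2$-tone $k$-coloring, if $h$ gets the label $ab$, then the rim labels are pairwise distinct $2$-subsets of $\{1,\dots,k\}\setminus\{a,b\}$, and consecutive rim labels are disjoint. In other words, the rim labels trace a cycle of length exactly $d$, with distinct vertices, in the Kneser graph $KG(k-2,2)$.

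Conversely, such a cycle gives a coloring: label the hub with the two extra colors and the rim along the cycle. Every rim label avoids the hub's colors, so all distance-one conditions hold, and every pair of rim labels is distinct, so all distance-two conditions hold. Hence
\[
\tau_2(W_d)=2+\min\{m : KG(m,2)\text{ contains a cycle of length } d\}.
\]
Everything then reduces to deciding which cycle lengths occur in $KG(m,2)$.

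First, the counting lower bound. A cycle of length $d$ needs $\binom{m}{2}\ge d$ vertices, which gives $m\ge \frac{1+\sqrt{1+8d}}{2}$. This is exactly the general formula $\left\lceil \frac{5+\sqrt{1+8d}}{2}\right\rceil$ once the two hub colors are added. Second, the small graphs.
\begin{itemize}
\item $KG(4,2)$ is a perfect matching, so $m\ge 5$ for every $d\ge 3$.
\item $KG(5,2)$ is the Petersen graph. It has cycles of lengths $5,6,8,9$ and none of lengths $3,4,7,10$: it has girth $5$, it is classically known to have no $7$-cycle, and it is non-Hamiltonian. This gives $\tau_2(W_d)=7$ for $d\in\{5,6,8,9\}$ and $\tau_2(W_d)\ge 8$ for $d\in\{3,4,7,10\}$.
\item $KG(6,2)$ has $15$ vertices. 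I would exhibit explicit cycles of every length $3,\dots,15$. For example, $12,34,56$ is a triangle, and a Hamiltonian cycle can be listed directly. This settles $\tau_2(W_d)=8$ for $3\le d\le 15$ outside $\{5,6,8,9\}$. For $11\le d\le 15$ this also agrees with the formula, since it gives $8$ there.
\end{itemize}

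For large $d$, the counting bound must be matched. So I need that $KG(m,2)$ is pancyclic for every $m\ge 6$: it should contain cycles of all lengths from $3$ to $\binom{m}{2}$. Then for $\binom{m-1}{2}<d\le\binom{m}{2}$ the minimal $m$ is forced by counting, and the formula follows.

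I would prove pancyclicity by induction on $m$, with $m=6$ as the base case. The inductive step goes as follows. Take a Hamiltonian cycle of $KG(m,2)$. The new vertices $\{i,m+1\}$, $1\le i\le m$, are adjacent to every old pair avoiding $i$. Insert these new vertices one at a time into edges $PQ$ of the current cycle with $i\notin P\cup Q$. Each insertion increases the length by one, so every length from $\binom{m}{2}$ up to $\binom{m+1}{2}$ is realised. Shorter lengths are inherited from $KG(m,2)$, which is a subgraph.

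The main obstacle is this insertion step. I must show that the $m$ new vertices can be inserted into distinct edges of the cycle. Each new vertex $\{i,m+1\}$ is blocked only by cycle edges touching color $i$. Since the edges not touching a given color $i$ form a large majority of the cycle, a Hall-type greedy choice should work. If the greedy choice runs into trouble, I would fall back on an explicit Hamiltonian cycle of $KG(m,2)$ with a regular structure that makes suitable edges visible directly.
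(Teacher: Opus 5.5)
The paper does not prove this lemma; it is quoted from \cite{B1}. So your argument has to stand on its own, and its overall structure is sound. The reduction $\tau_2(W_d)=2+\min\{m: C_d\subseteq KG(m,2)\}$ is exactly right. The counting bound gives the third case. The cycle spectrum $\{5,6,8,9\}$ of the Petersen graph gives the first case and the lower bound $8$ for $d\in\{3,4,7,10\}$.

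The weak point is the insertion step. You leave it as ``should work'', and the justification you give is false precisely where your induction starts.

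\emph{Counting the admissible edges.} For every cycle edge $PQ$ we have $P\cap Q=\emptyset$, so each edge contains at most one of the $m-1$ vertices using color $i$. Each such vertex lies on two cycle edges. Hence exactly $2(m-1)$ edges touch $i$, and $\binom{m}{2}-2(m-1)=(m-1)(m-4)/2$ edges avoid it.

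\emph{Why ``large majority'' fails at $m=6$.} Here only $5$ of the $15$ edges avoid a given color, a third rather than a majority. Yet six new vertices must go on distinct edges; distinctness is forced, since any two new vertices share color $m+1$. So a blind greedy choice can get stuck on the last color.

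\emph{The fix.} Hall's theorem rescues the step. A set $I\subsetneq\{1,\dots,m\}$ of colors has at least $(m-1)(m-4)/2\ge m-1\ge |I|$ admissible edges. For $I=\{1,\dots,m\}$, every edge avoids $m-4\ge 2$ colors, so all $\binom{m}{2}\ge m$ edges are admissible. For $m\ge 7$ plain greedy already works, since then $(m-1)(m-4)/2\ge m$.

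\emph{The base case.} This also has to be written out, but only lengths $3,4,7,10,\dots,15$ need checking in $KG(6,2)$; lengths $5,6,8,9$ occur in the Petersen subgraph.
\begin{itemize}
\item The $3$-, $4$- and $7$-cycles are exactly the labelings of $C_3,C_4,C_7$ in Lemma~\ref{l2}.
\item A Hamiltonian cycle is $12,34,56,13,24,35,14,25,36,15,26,45,16,23,46$.
\item Lengths $10$ to $14$ come from your own insertion idea applied to the Petersen $9$-cycle $12,34,15,23,14,25,13,24,35$. Place $16,26,36,46,56$ on the edges $(24,35),(34,15),(14,25),(15,23),(12,34)$ respectively.
\end{itemize}
With these two items supplied, your proof is complete.
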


\begin{theorem}\label{t2'}
Let $H = T \cup C$ be a Halin graph. Then
\[
\tau_2(H)\le 
\max\left\{
10,\;
\left\lceil \frac{13+\sqrt{8\Delta(H)-15}}{2} \right\rceil
\right\}.
\]
\end{theorem}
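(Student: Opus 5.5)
The plan is to mimic the proof of Theorem~\ref{t1'}. First color the inner tree with a small palette, then color the leaves (the cycle $C$) using labels that combine a few new colors with old ones.

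Let $k=\left\lceil \frac{5+\sqrt{8\Delta(H)+1}}{2}\right\rceil$ be the tree bound of Lemma~\ref{l4}. The target bound $\lceil (13+\sqrt{8\Delta-15})/2\rceil$ is, roughly, the tree bound for degree $\Delta-2$ plus four colors. Indeed, $\frac{5+\sqrt{8(\Delta-2)+1}}{2}+4=\frac{13+\sqrt{8\Delta-15}}{2}$. So the natural plan is as follows.

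\begin{enumerate}
\item Handle the wheel case. If $T$ is a star, then $H=W_{\Delta+1}$, and Lemma~\ref{l5} gives a value at most the claimed maximum; the constant $10$ absorbs the small cases with $d\le 15$.
\item Set $T'=T-V(C)$. Every internal vertex of $T$ with degree $d$ in $T$ keeps degree $d$ in $T'$ minus its number of leaf neighbours. I would like $\Delta(T')\le\Delta-2$. That fails in general, since an internal vertex may have all its neighbours internal. I therefore plan instead to color $T'$ with $m=\tau_2$ of a tree of maximum degree $\Delta$, using colors $\{1,\dots,m\}$, and to show that only about $m-4$ colors' worth of constraints actually matter. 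Alternatively, and this is my preferred route, color $T'$ by Lemma~\ref{l4} with $a=\left\lceil\frac{5+\sqrt{8(\Delta-2)+1}}{2}\right\rceil$ colors. To justify this, observe that a vertex of $T'$ of full degree $\Delta$ in $T'$ has no leaf neighbours. The needed reduction is that in a Halin graph the distance-2 constraints on internal vertices come only from $T$, and that $f$ restricted to $T'$ is a partial 2-tone coloring of $H$ exactly as in the proof of Theorem~\ref{t1'}. If $\Delta(T')$ can reach $\Delta$, I would fall back to using $a$ colors for the tree part as $\lceil(5+\sqrt{8\Delta+1})/2\rceil$ and $2$ extra colors. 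I would then check algebraically that this is still at most the stated bound, which needs $\sqrt{8\Delta+1}\le\sqrt{8\Delta-15}+4$. That holds for all $\Delta\ge 3$, since $(\sqrt{8\Delta-15}+4)^2=8\Delta+1+8\sqrt{8\Delta-15}$. So the tree gets $\lceil(5+\sqrt{8\Delta+1})/2\rceil$ colors, and the leaves get $L$-labels using one of $4-2=2$ new colors (say $\{p,q\}$) plus one tree color, with about $2$ further spare colors.
\item Color the leaves $x_1,\dots,x_\ell$ in cyclic order with $L$-labels: one new color and one tree color. Adjacent leaves must use different new colors. Each leaf $x_i$ has neighbours $y_i$, $x_{i-1}$ and $x_{i+1}$. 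Its leaf second neighbours are only $x_{i\pm2}$ and the other leaf children of $y_i$, since those are the leaves adjacent to $y_i$ on the cycle. Its internal second neighbours are the tree neighbours of $y_i$, and their labels avoid the new colors, so they create no conflict with an $L$-label. The conflicts to avoid are therefore disjointness from $f(y_i)$ and from the neighbouring leaves, plus distinctness from the labels of the other leaves at distance two. The key counting step is that the siblings of $x_i$ among the leaves of $y_i$ number up to $\Delta-1$. Having enough new-color/old-color combinations requires roughly $2(k-2-\dots)$ candidates, exceeding $\Delta$ siblings. This is where the square root in the bound is really used, and it is the step I expect to be the main obstacle. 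In particular, I must verify that the number of candidate $L$-labels, about $c\cdot(k-4)$, beats the number of forbidden labels.
\item Close the cycle at the last leaf using the spare colors (or a label made entirely of new colors such as $pq$), as in the final case analysis of Theorem~\ref{t1'}.
\end{enumerate}
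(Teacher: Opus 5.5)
Your plan fails at the step you flag as the main obstacle. The failure is structural, not a matter of sharper counting: labels made of one ``new'' color and one tree color are far too few. Take $T'=K_{1,\Delta}$ and attach $\Delta-1$ leaves to each of its $\Delta$ leaves. The resulting Halin graph has maximum degree $\Delta$. Its inner tree needs all $a=\lceil(5+\sqrt{8\Delta+1})/2\rceil$ tree colors, since the tree formula is attained by $K_{1,\Delta}$. Each middle vertex $y$ has $\Delta-1$ consecutive leaf children. These children are pairwise at distance at most two through $y$, so they need $\Delta-1$ pairwise distinct labels, all disjoint from $f(y)$. Consecutive children are adjacent, so $p$ and $q$ alternate along the block. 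One of $p,q$ is therefore carried by $\lceil(\Delta-1)/2\rceil$ children, and these need distinct tree colors from $A\setminus f(y)$, a set of size $a-2$. Already at $\Delta=14$ this asks for $7$ colors out of $6$. The spare colors do not rescue the scheme. For $\Delta\ge 6$ there are at most $k-a\le 4$ colors outside the tree palette. So the labels avoiding $f(y)$ that contain at least one of them number at most $4(a-2)+6$, which is $66<99$ at $\Delta=100$. In general your candidate set has size $O(\sqrt{\Delta})$ while the sibling constraints number $\Theta(\Delta)$, so no estimate of the form $c\,(k-4)$ can succeed. The device from the cubic proof gives every leaf a private color so its label is automatically distinct from internal labels. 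That device is incompatible with a bound of order $\sqrt{2\Delta}$: to get $\Theta(\Delta)$ distinct labels from $\Theta(\sqrt{\Delta})$ colors, leaves must be allowed arbitrary pairs, including pairs of colors that also occur on internal vertices.

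What is missing is the right count, together with a careful choice of the last vertex to color. The bound is calibrated by $\binom{k-6}{2}\ge\Delta(H)-2$, equivalently $k\ge\frac{13+\sqrt{8\Delta(H)-15}}{2}$. That is: six colors are blocked by three colored neighbors, and enough pairs of the remaining colors are left to dodge about $\Delta(H)-2$ second neighbors. Your reading as ``tree bound for $\Delta-2$ plus four'' gives the same number, because the tree formula already contains the two colors blocked by a single neighbor, but it points to the wrong mechanism. With arbitrary pairs, a leaf with only two colored neighbors has at least $\binom{k-4}{2}\ge\max\{15,\,\Delta(H)+\sqrt{8\Delta(H)-15}\}$ candidates, more than its at most $\Delta(H)+3$ colored second neighbors. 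So the whole difficulty sits at the vertex colored last. It is guaranteed only $\binom{k-6}{2}$ candidates and, if chosen carelessly, can have $\Delta(H)+O(1)$ colored second neighbors. The paper handles this by a minimal counterexample. Let $x$ be an internal vertex farthest from the root, $p$ its parent, and $x_1,\dots,x_m$ its consecutive leaf children. If $m=2$, it deletes $x_1,x_2$ and joins $x$ to $x_\ell$ and $x_3$, so $x$ becomes a leaf of a smaller Halin graph; it then colors $x_1,x_2$ greedily, which needs only $k\ge 10$. If $m\ge 3$, it deletes $x_2$, adds the edge $x_1x_3$, and extends to $x_2$. There the neighbors $x,x_1,x_3$ block six colors, and the only second neighbors are $p$, the other children of $x$, and the outer cycle-neighbors of $x_1$ and $x_3$. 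Your fallback coloring of $T'$ and the inequality $\sqrt{8\Delta+1}\le\sqrt{8\Delta-15}+4$ are correct. They cannot become a proof, however, while leaf labels are restricted as you propose.
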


\begin{proof}
Suppose, for the sake of contradiction, that the theorem is false. 
Let $H = T \cup C$ be a counterexample with the minimum number of vertices, 
where $C = x_1 x_2 \cdots x_\ell x_1$ is a cycle. Set $k = \max\left\{
10,\;
\left\lceil \frac{13+\sqrt{8\Delta(H)-15}}{2} \right\rceil
\right\}$.

\noindent By Lemma~\ref{l5}, $H$ is not a wheel graph; hence, the tree $T$ contains at least two internal vertices. Root $T$ at an arbitrary internal vertex, and let $x$ be an internal vertex at maximum distance from the root in $T$. By this choice, $x$ has exactly one internal neighbor (its parent, say $p$) and all its remaining neighbors are leaves of $T$ that appear consecutively on $C$. Denote, without loss of generality, these leaves by $x_1,x_2,\dots,x_m$. Since $T$ has no vertices of degree two, we have $m\ge 2$.

\begin{claim} 
 $m\ge 3$.
\end{claim}

\begin{proof}
Suppose, to the contrary, that $m= 2$. Define a smaller graph $H' = (H - \{x_1,x_2\})+ \{x_\ell x,\, x x_3\}$. $H'$ is again a Halin graph: the vertex $x$ becomes a leaf on the new cycle $C'$, and its degree in the new tree $T' = T - \{x_1,x_2\}$ remains three. By the minimality of $H$, $H'$ admits a valid $2$-tone $k$-coloring $f$.

\noindent We now extend $f$ to $H$. First, consider $x_1$. Its neighbors in $H$ are $\{x,x_\ell,x_2\}$; at this stage, $x_2$ is uncolored. Thus, the number of available labels for $x_1$ satisfying the adjacency condition is at least $\binom{k-4}{2}$. Moreover, $x_1$ has at most four vertices at distance two, and since $k\ge 10$, we have $\binom{k-4}{2}>4$. Hence, at least one label exists for $x_1$.

\noindent Next, consider $x_2$. Its neighbors in $H$ are $\{x,x_1,x_3\}$, all of which are now colored and together use at least six colors. Thus, the number of labels available for $x_2$ satisfying the adjacency condition is at least $\binom{k-6}{2}$. The vertex $x_2$ has at most four second neighbors, and since $k\ge 10$, we have $\binom{k-6}{2}>4$. Therefore, a label can be assigned to $x_2$.

\noindent This yields a valid $2$-tone $k$-coloring of $H$, contradicting the assumption that $H$ is a minimal counterexample.\end{proof}

\noindent Thus, the neighbors of $x$ in $T$ are $\{p,x_1,x_2,\dots,x_m\}$ with $m\ge 3$. In this case, $d_H(x)=m+1\ge 4$. Construct a smaller graph $H' = (H - \{x_2\}) + \{x_1x_3\}$. Since $m\ge 3$,  $x$ remains adjacent to at least two leaves ($x_1$ and $x_3$) and to its parent $p$ in $H'$, and hence $d_{H'}(x)\ge 3$. Thus, $H'$ is a Halin graph with $|V(H')|< |V(H)|$. By the minimality of $H$, $H'$ admits a valid $2$-tone $k'$-coloring,
where
\[
k'=\max\left\{
10,\;
\left\lceil \frac{13+\sqrt{8\Delta(H')-15}}{2} \right\rceil
\right\}.
\]
Since $\Delta(H')\le \Delta(H)$, we have $k'\le k$. Hence, $H'$ admits a $2$-tone $k$-coloring $f$.

\noindent We now extend $f$ to $x_2$ in $H$. The vertices at distance two from $x_2$ are:
\begin{itemize}
    \item from $x$: the parent $p$ and the leaves $\{x_4,\dots,x_m\}$;
    \item from $x_1$: the neighbor of $x_1$ on $C$ other than $x_2$, namely $x_\ell$;
    \item from $x_3$: the neighbor of $x_3$ on $C$ other than $x_2$, namely $x_4$.
\end{itemize}
Thus, $|N_H^2(x_2)| = |\{p,x_4,x_5,\dots,x_m\}\cup \{x_\ell\}|\le (d_H(x)-3)+1\le \Delta(H)-2$. Since $x_2$ has three colored neighbors, at most $6$ colors are forbidden by adjacency, leaving at least $\binom{k-6}{2}$ possible labels for $x_2$. By the definition of $k$ and since $\Delta(H)\ge 3$, we have $\binom{k-6}{2}>\Delta(H)-2$, ensuring the existence of a label for $x_2$. This again yields a $2$-tone $k$-coloring of $H$, a contradiction.\end{proof}

\noindent While Theorem \ref{t2'} provides a general upper bound on $\tau_2(H)$ in terms of $\Delta(H)$, Theorem \ref{t1'} yields a strictly stronger result in the cubic case.

\noindent Asymptotically, the upper bound for general planar graphs grows as $\sqrt{4\Delta}$. In contrast, our bound for Halin graphs scales as $\frac{\sqrt{8\Delta}}{2}=\sqrt{2\Delta}$. Moreover, our bound is strictly stronger than the conjectured bound $\sqrt{3\Delta}$ proposed by Cranston and LaFayette.


\end{document}